\documentclass[11pt,a4paper]{article}

\usepackage[english]{babel}
\usepackage{graphicx} 
\usepackage{amsthm} 
\usepackage{amsmath}
\usepackage{amssymb}
\usepackage{mathtools}
\usepackage{amsthm} 
\usepackage{wasysym} 
\usepackage{mathrsfs} 
\usepackage{bbm} 
\usepackage{calligra} 
\usepackage{enumerate} 
\usepackage[shortlabels]{enumitem}
\usepackage{xcolor}
\usepackage{multirow} 
\usepackage{subfigure}
\usepackage{nicefrac}
\usepackage{comment}

\usepackage[hidelinks]{hyperref}

\newcommand{\w}{\omega}
\newcommand{\pa}{\partial}
\newcommand{\na}{\nabla}
\newcommand{\eps}{\varepsilon}
\newcommand{\N}{\mathbb{N}}
\newcommand{\Z}{\mathbb{Z}}

\newcommand{\R}{\mathbb{R}}

\newcommand{\LP}{\mathbb{P}}
\newcommand{\T}{\mathbb{T}}

\newcommand{\D}{\mathcal{D}}

\newcommand{\norm}[1]{\|#1\|}

\def\heat#1{e^{#1 \Delta}}

\newtheorem{thm}{Theorem}[section]
\newtheorem{prop}[thm]{Proposition}
\newtheorem{cor}[thm]{Corollary}
\newtheorem{lemma}[thm]{Lemma}
\newtheorem{defi}[thm]{Definition}
\newtheorem{rem}[thm]{Remark}

\title{Global well-posedness of the 3D Navier-Stokes\\ equations
with helical $L^1$ vorticity}
\author{Francisco Gancedo, Antonio Hidalgo-Torn\'e}

\begin{document}
	
\maketitle

\begin{abstract}
We show that the Navier-Stokes equations on $\mathbb R^2\times\mathbb T$
are globally well posed for initial vorticities that are both helically symmetric 
and integrable. This class of data typically generates velocity fields of infinite kinetic energy, so this result is not covered by the classical finite-energy well-posedness theory. This result is
supercritical from the perspective of the three-dimensional
Navier-Stokes scaling, and it is made possible by the special structure of
helical flows using
time-weighted Kato-type spaces. 
\end{abstract}
\section{Introduction}

We consider the incompressible Navier-Stokes equations
\begin{equation}\label{eq:NSintro}
\left\{
\begin{aligned}
    &\partial_t u + u\cdot\nabla u - \Delta u + \nabla P=0,
        \qquad t>0,\quad x\in\R^3,\\
    &\nabla\cdot u=0,\\
    &u|_{t=0}=u_0.
\end{aligned}
\right.
\end{equation}
We denote by
$
    \omega=\nabla\wedge u
$
the vorticity. Taking the curl of the first identity in \eqref{eq:NSintro}, one obtains
\begin{equation}\label{eq:vorticityintro}
    \partial_t\omega
    +u\cdot\nabla\omega
    -\omega\cdot\nabla u
    -\Delta\omega=0.
\end{equation}
Throughout the paper, the velocity $u$ associated with a divergence-free vorticity $\omega$ is always understood to be the velocity given by the Biot–Savart law
\[
u:=\nabla\wedge(-\Delta)^{-1}\omega.
\]
In particular, we do not add any curl-free harmonic component to the
velocity.

Global well-posedness for the three-dimensional Navier-Stokes equations remains a major open problem. In contrast, in two dimensions, Leray-Hopf solutions are globally well posed for divergence-free initial data in $L^2$; see for instance \cite{lemarierieusset2002,RobinsonRodrigoSadowski16,cheminfourier}.
From the point of view of the three-dimensional Navier-Stokes scaling,
vorticity in $L^{3/2}(\R^3)$ is critical. Indeed, if $(u,p)$ is a solution of
\eqref{eq:NSintro} on $\R^3$, then for every $\lambda>0$ the rescaled fields
$
    u_\lambda(t,x)=\lambda u(\lambda^2t,\lambda x),
    \quad
    p_\lambda(t,x)=\lambda^2 p(\lambda^2t,\lambda x),
$
also solve the Navier-Stokes equations, with vorticity
$
    \omega_\lambda(t,x)=\lambda^2\omega(\lambda^2t,\lambda x).
$
Consequently,
$
    \|\omega_\lambda(0)\|_{L^p(\mathbb R^3)}
    =
    \lambda^{2-\frac3p}
    \|\omega_0\|_{L^p(\mathbb R^3)}.
$
Thus $p=3/2$ is the critical exponent for vorticity. For $3/2<p<3$, the
Biot-Savart law places the associated velocity in a subcritical regularity
class, and the classical theory
\cite{kato1984} yields local well-posedness for arbitrary data in such
classes. At the critical level, one has global well-posedness under
smallness assumptions in suitable scale-invariant spaces; see, for
instance, \cite{kato1984,kozonoyamazaki1994,kochtataru2001,gigamiyakawa1989}. Even at the critical level, well-posedness may fail outside the small-data regime: non-uniqueness for smooth solutions emanating from large initial data in the critical space $BMO^{-1}$ was obtained in \cite{coiculescu2026}.
Below the critical threshold, the problem is supercritical, and one does
not expect a general well-posedness theory without additional structure.
In this direction, in \cite{BuckmasterVicol19}  
non-uniqueness of solutions with vorticity in $C([0,T];L^1(\T^3))$ is shown. See \cite{JMS21} for non-uniqueness with vorticity in 
$C([0,T];L^p(\T^3))$, $1<p<6/5$. 

In two dimensions, the $L^1$ norm of the vorticity,
and more generally the total variation of a finite measure, are invariant. Moreover, the vorticity equation is the scalar
transport-diffusion equation
$
    \partial_t\omega+u\cdot\nabla\omega-\Delta\omega=0,
    \,\, u=\nabla^\perp(-\Delta)^{-1}\omega,
$
and the vortex stretching term $\omega\cdot\nabla  u$ is absent. This allows one to solve the
Cauchy problem for rough initial vorticities. In particular, global
existence for finite measures as initial vorticity was obtained in
\cite{cottet1986,giga1988}, while uniqueness was proved in
\cite{gallaghergallaylions2005,gallaghergallay2005}; see also
\cite{gallaywayne2005,gallay2012,bedrossianmasmoudi2014}. There are also nonuniqueness results in two dimensions, just below the critical scale where well-posedness holds \cite{CheskidovLuo22,CheskidovLuo2023,BurczakHidalgoTorne2025}.

The purpose of this paper is to show that the situation in three dimensions changes
substantially under helical symmetry, which is a natural invariant structure for the Navier-Stokes equations. Helical symmetry is the invariance
under a simultaneous rotation around the vertical axis and translation
along it (see Definition \ref{def:helicsym}). A
natural domain without boundaries for such flows is $\mathbb R^2\times \gamma\mathbb T$, where $\gamma$ is a positive real number and $\T=\R/(2\pi\Z)$.

In
\cite{MahalovTitiLeibovich90}, global well-posedness in the
finite-energy setting is obtained. Further aspects of helically symmetric flows,
including limiting equations and vanishing-viscosity limits, were studied in
\cite{lopesmazzucatoniunussenzveig2014,jiulopesniunussenzveig2018}.
The finite-energy framework is natural in bounded domains. However, generic helical flows have infinite energy in the
unbounded setting $\mathbb R^2\times\mathbb \gamma \T$ because they have
two-dimensional behavior at spatial infinity.
A simple example is provided by the Oseen vortex, viewed as a helically
symmetric flow. Fix $\delta>0, \Gamma\neq 0$ and define
$$
    \omega^\delta_0(x)
    =
    \frac{\Gamma}{4\pi\delta}
    e^{-\frac{|x_h|^2}{4\delta}}\,e_3,
    \quad
    x=(x_h,x_3)\in\mathbb R^2\times \gamma \mathbb T.
$$
This vorticity is smooth, integrable, and
helically symmetric. The associated velocity is
$$
    u^\delta_0(x)
    =
    \frac{\Gamma}{2\pi |x_h|}
    \left(
        1-e^{-\frac{|x_h|^2}{4\delta}}
    \right)e_\theta,\quad\mbox{with}\quad e_\theta=(-\sin\theta,\cos\theta,0).
$$
As $|x_h|\to\infty$, one has
$$
    |u^\delta_0(x)|\sim \frac{|\Gamma|}{2\pi |x_h|}.
$$
Therefore,
$$
    \int_{\mathbb R^2\times \gamma \mathbb T}|u^\delta_0(x)|^2\,dx
    =
    \infty.
$$
Thus even very regular helically symmetric vorticities in $L^1$ may
generate velocity fields of infinite kinetic energy. This makes it natural
to work in local-energy or uniformly local spaces rather than in the usual
energy space.

In our previous work \cite{GH-T2023}, we proved global-in-time
existence for the Navier-Stokes equations starting from a helical
vortex filament. Short-time uniqueness was established in a
restrictive class requiring a local Oseen-type structure near the
filament as $t\to0^+$, as in \cite{bedrossiangermainharropgriffiths23}. At positive times, we obtained a broader
well-posedness theory for helically symmetric local energy weak
solutions with suitable spatial decay.
This showed that the helical structure can be used to obtain global existence beyond the finite-energy framework.
However, that approach did not provide global-in-time control of the
integrability of the vorticity in any Lebesgue space. In principle,
such estimates are obstructed by the combination of infinite-energy
velocities and vortex stretching. Also in the infinite-energy setting, \cite{Vila2024} proved the stability and asymptotics of the Lamb-Oseen vortex, viewed as a three-dimensional helical flow, under arbitrarily large $H^1$
 helical perturbations.

In the present paper, we consider helical and integrable initial vorticities
without further structural assumptions. Although the theory of \cite{GH-T2023} can be
applied in this setting, we avoid it for the reasons discussed above. 
The main result of the present paper shows that, under helical symmetry, $L^1$-initial vorticity is enough to obtain global well-posedness, with bounds on global spatial norms for any positive time. Such supercritical results are also possible under the assumption of axial symmetry, under the additional assumption of vanishing swirl \cite{GallaySverak2015}. Remarkably, in that setting it is also possible to establish the global existence of solutions when the initial vorticity is a measure \cite{FengSverak15}. More restrictively, uniqueness has been established for axisymmetric measure-valued vorticity restricted to the case of a single vortex ring \cite{GallaySverak19} or a finite combination of coaxial vortex rings with circulations of the same sign \cite{LevyLiu2018}.

The precise statement of our main Theorem is the following. To simplify the exposition, we fix the helical symmetry with $\gamma=1$. Consequently, we choose $\T=\R/(2\pi\Z)$. The Kato-type space $X_T^{4/3}$ (see definition \ref{KatoSpace}) is used to  capture the instantaneous gain of integrability.  

\begin{thm}[Global-in-time well-posedness]\label{thm:globalmain}
Let $\w_0\in L^1(\R^2\times\T)$ be helically symmetric and divergence free. Then, for any $T>0$, there exists a unique helically symmetric mild solution to \eqref{eq:NSDuhamel} in the class $\w \in C([0,\infty);L^1)\cap X^{4/3}_T$, with $\|\w\|_{X^{4/3}_t}\to 0$ as $t\to0^+ $. For any $t>0$, the vorticity $\w(t)$ belongs to the Sobolev spaces $W^{k,p}$ for any $k\in \N$ and $p\in [1,\infty]$.
\end{thm}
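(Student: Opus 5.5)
The proof has two parts: a local theory in the Kato space $X^{4/3}_T$, valid for any $L^1$ datum, and a globalization that uses helical symmetry.

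\textbf{Local theory.} We write the vorticity equation in Duhamel form,
\[
\w(t)=e^{t\Delta}\w_0-\int_0^t e^{(t-s)\Delta}\nabla\cdot\big(u\otimes\w-\w\otimes u\big)(s)\,ds,
\]
which is the mild formulation \eqref{eq:NSDuhamel}. We then run a Kato fixed-point argument in $X^{4/3}_T$, normed by $\sup_{0<t<T} t^{1/8}\|\w(t)\|_{L^{4/3}}$. The exponent $1/8$ is chosen because helical functions on $\R^2\times\T$ behave two-dimensionally at large scales, so $\|e^{t\Delta}\w_0\|_{L^{4/3}}\lesssim t^{-1/8}\|\w_0\|_{L^1}$ for $t\lesssim1$, and also for all $t$ by the 2D heat kernel at large times.

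The needed ingredients are the following.
\begin{itemize}
\item A Biot--Savart estimate on $\R^2\times\T$: $\|u\|_{L^4}\lesssim\|\w\|_{L^{4/3}}$, obtained from the horizontal $L^{4/3}\to L^4$ Riesz potential estimate together with the $x_3$-periodic part.
\item The heat-gradient bound $\|\nabla e^{t\Delta}f\|_{L^{4/3}}\lesssim t^{-1/2-1/4}\|f\|_{L^{1}}$.
\end{itemize}
With these, the bilinear term satisfies
\[
\|B(\w,\w)\|_{X^{4/3}_T}\lesssim \|\w\|_{X^{4/3}_T}^2\int_0^1(1-\sigma)^{-3/4}\sigma^{-1/4}\,d\sigma .
\]
Since $\|e^{t\Delta}\w_0\|_{X^{4/3}_T}\to0$ as $T\to0$ for $\w_0\in L^1$ (by density of smooth data), the fixed point closes for small $T$ whatever the size of $\w_0$. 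This gives uniqueness in the class where $\|\w\|_{X^{4/3}_t}\to0$, and also $\w\in C([0,T];L^1)$ from the analogous estimate of $B$ in $L^1$.

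Helical symmetry is preserved by uniqueness, because the equation commutes with the helical group action. Parabolic bootstrapping in the Duhamel formula, using higher $L^p$ Kato norms and derivatives, then gives $\w(t)\in W^{k,p}$ for every $t>0$.

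\textbf{Globalization.} Because the local existence time depends on the profile of $\w_0$ and not only on its norm, we need an a priori bound that lets us restart from any $t_0>0$. The datum $\w(t_0)$ is smooth and in $L^1$, so it suffices to control $\|\w(t)\|_{L^1}$, or some $L^p$ norm with $p\in(1,2]$, on finite intervals for smooth helical solutions. The local step can then be iterated with a uniform time step, based on $L^p$ data with $p>1$, which is subcritical.

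This is the main obstacle, because vortex stretching is present. The plan is to use the helical reduction of \cite{MahalovTitiLeibovich90}. For helical fields, the quantity $\eta=\w\cdot\xi$, with $\xi=(x_2,-x_1,1)$ the helical vector field, satisfies a scalar 2D-like transport--diffusion equation without stretching, possibly with lower-order terms. Moreover, the full vorticity is determined by $\eta$ together with the helical swirl $u\cdot\xi$, which obeys its own advection--diffusion equation.

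From these two equations we would derive:
\begin{itemize}
\item $\|\eta(t)\|_{L^1}\le\|\eta(t_0)\|_{L^1}$ and $L^p$ maximum-principle bounds;
\item weighted bounds on the swirl.
\end{itemize}
Reconstructing $\w$ from these gives at most exponential growth of $\|\w\|_{L^p}$. The weight $|\xi|$ is unbounded, so we would localize with a horizontal cutoff at large $|x_h|$. Far from the axis the flow is nearly two-dimensional, and there we can rely directly on the decay from the smooth $W^{k,p}$ solution. This localization is the delicate part.

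Finally, the a priori bound excludes finite-time blow-up of the smooth solution. Combined with local uniqueness and continuity in $L^1$, it yields the global solution for every $T$.
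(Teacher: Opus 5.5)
\textbf{Verdict: there is a genuine gap, in the globalization step.} Reducing the global problem to an a priori $L^p$ bound, $p>1$, on bounded time intervals is sound; the paper likewise ends by restarting the local theory from a uniform $L^{4/3}$ bound. What fails is your mechanism for getting that bound.

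Take $\xi=(-x_2,x_1,1)$, the generator of $S_{\theta,1}$, and set $w=u\cdot\xi$, $\eta=\omega\cdot\xi$. Then $\xi\cdot\nabla P=0$ and $\nabla\xi$ is antisymmetric. Helical fields satisfy $\nabla w=\xi\wedge\omega$, hence $\omega\cdot\nabla w=0$. A direct computation gives
\[
\partial_t w+u\cdot\nabla w-\Delta w=-2\omega_3,\qquad \partial_t\eta+u\cdot\nabla\eta-\Delta\eta=2(u\wedge\omega)_3+2\Delta u_3 .
\]
These equations undermine each step of your plan.
\begin{enumerate}
\item Viscosity creates swirl even from swirl-free data. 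The equation for $\eta$ is forced by a quadratic Lamb-vector term and a linear term. Neither has a sign or is controlled by $\eta$, so you get neither $\|\eta(t)\|_{L^1}\le\|\eta(t_0)\|_{L^1}$ nor an $L^p$ maximum principle.
\item Switching to $\omega_3$, the scalar transported by swirl-free helical Euler flows, does not help. Its equation carries the swirl-driven source $|\xi|^{-2}(\nabla w\wedge\xi)\cdot\nabla u_3$.
\item The reconstruction is circular. You have $\omega=|\xi|^{-2}(\eta\,\xi+\nabla w\wedge\xi)$, and a bound on $\nabla w=\xi\wedge\omega$ is already a weighted bound on $\omega$.
\item The weights clash with the data. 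An integrable $\omega$ need not have $|x_h|\,\omega\in L^1$. For nonzero circulation $w\to\Gamma/(2\pi)$ at infinity, as for the Oseen vortex of the introduction.
\item Cutting off at large $|x_h|$ and using the decay of the smooth solution there presupposes control at later times, which is exactly what is missing.
\end{enumerate}

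\textbf{The missing idea} is to split off the infinite-energy tail as a small perturbation and treat the rest by two-dimensional energy estimates. At $t_1>0$, smoothing gives $\omega(t_1)\in L^{4/3}\cap L^4$ and $u(t_1)\in L^4$. The paper writes $u(t_1)=\mathbb{P}((1-\chi_R)u(t_1))+\mathbb{P}(\chi_R u(t_1))$.
\begin{enumerate}
\item The first piece has vorticity $(1-\chi_R)\omega(t_1)-\nabla\chi_R\wedge u(t_1)$, which is as small as needed in $L^{4/3}\cap L^4$ once $R$ is large. Proposition~\ref{prop:longtimesmallpart} then solves from it on the whole prescribed interval $[t_1,T]$. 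Its threshold is of order $T^{-1/4}$, so $T$ is fixed first and then $R$.
\item The second piece lies in $H^1$. The perturbed system \eqref{eq:NSperturbedvelocity} is controlled on $[t_1,T]$ by energy estimates using $\|f\|_{L^4}^2\lesssim\|f\|_{L^2}\|\nabla f\|_{L^2}$ for helical $f$ (Lemma~\ref{lem:gagliardoniremberg}) and Gronwall (Proposition~\ref{prop:perturbedNSlongtimeWP}).
\end{enumerate}
Duhamel's formula then gives $\omega\in L^\infty([t_1,T];L^1\cap L^{4/3})$, and Corollary~\ref{cor:uniquenessNSLp} gives uniqueness. Helical symmetry enters through two-dimensional functional inequalities, not through a maximum principle.

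\textbf{Two smaller points on the local step}, which otherwise follows the paper.
\begin{enumerate}
\item For helical data the heat flow scales two-dimensionally: $\|e^{t\Delta}\omega_0\|_{L^{4/3}}\lesssim t^{-1/4}\|\omega_0\|_{L^1}$ (Lemma~\ref{lemma:heathelicalestimates}). The rate $t^{-1/4}$ is sharp for data concentrated near a helix. So the weight must be $t^{1/4}$, as in Definition~\ref{KatoSpace}, not $t^{1/8}$. With $t^{1/8}$, the free evolution of a general $L^1$ datum is not in your space and the density argument fails. With $t^{1/4}$, the bilinear integral is $\int_0^1(1-\sigma)^{-3/4}\sigma^{-1/2}\,d\sigma$ and the scheme closes.
\item The bound $\|u\|_{L^4}\lesssim\|\omega\|_{L^{4/3}}$ fails for general fields on $\mathbb{R}^2\times\mathbb{T}$, because concentrated fields scale three-dimensionally. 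It holds for helical fields by Lemma~\ref{lem:helicalembedding}. So the fixed point has to be run in the helical subspace from the start, rather than recovering symmetry afterwards by uniqueness.
\end{enumerate}
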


In other words, we prove that if
$
\omega_0\in L^1(\mathbb R^2\times\mathbb T)
$
is helically symmetric, then the Navier-Stokes equations are globally
well-posed in time-weighted Kato-type spaces $X^p_T$ adapted to the helical
heat-kernel scaling. 
The solution is unique in the corresponding Kato class. Moreover, we also quantify the blow-up rate in higher-order Sobolev norms as the solution approaches the initial data. See Theorem \ref{thm:higherregrate} below.

To obtain estimates for higher-order norms, we need to use the Kato–Ponce inequality, also known as the fractional Leibniz rule \cite{KatoPonce1vez}. It is known that this inequality holds on $\R^n$ \cite{GrafakosOh2014} and $\T^n$ \cite{BenyiOhZhao2025}, but, to the best of our knowledge, no proof for $\R^2\times \T$ is available in the literature. Building on the proof of this inequality in $\R^n$ \cite{GrafakosOh2014} and using a transference theorem \cite{RodriguezLopez2013}, we establish the corresponding inequality.

The paper is organized as follows: 
In Section \ref{sec:prelimanalysis}, we introduce the notions of helical symmetry and Kato-type spaces, establish some of their properties, and derive several preliminary results.
In Section \ref{sec:localexistence} we establish the local-in-time well-posedness of solutions. Section \ref{sec:globalintime} is devoted to the global-in-time unique continuation. Finally, in the Appendix we prove the Kato-Ponce inequality on domains of the form $\R^{n_1}\times \T^{n_2}$.

\subsection{Notation}
\begin{itemize}
\item $\Lambda=(-\Delta)^\frac{1}{2}$.
\item $\LP$ is the Leray Projector.
\item $L^p_TL^q_x=L^p((0,T);L^q(\R^2\times \T)).$ For time intervals not starting at $0$, we write $L^p_{[t_1,T]}L^q_x=L^p([t_1,T];L^q(\R^2\times \T)).$
\item We denote by $e^{t\Delta}$ the heat kernel in $\R^2\times \T$. To denote the heat kernel in a domain $D$, we write $e^{t\Delta}_D$.
\item For a tensor field $F=(F_{ij})$, we set
$$
(\nabla\cdot F)_j=\sum_i\partial_iF_{ij}.
$$

\item We use $f\lesssim g$ to denote $f\leq cg$, where $c$ is a constant that does not depend on any important parameter. We use $f\approx g$ to denote $f\lesssim g$ and $g\lesssim f$ at the same time.
\end{itemize}

\section{Preliminary analysis}\label{sec:prelimanalysis}

Since the following elementary lemma will be used several times throughout the paper, we include its statement and proof for completeness.
\begin{lemma}\label{lemma:intbeta}
If $-1<a,b\in \R$,
$$\int_0^t (t-\tau)^a \tau^b d\tau=t^{a+b+1}\int_0^1 (1-z)^a z^b dz=t^{a+b+1}\beta(a+1,b+1),$$
where $\beta(x,y)$ is the beta function.
\end{lemma}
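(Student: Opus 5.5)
The plan is to reduce the integral to the standard Euler integral on $[0,1]$ with the linear substitution $\tau = tz$, for fixed $t>0$.

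Under this substitution $d\tau = t\,dz$, and as $\tau$ runs over $(0,t)$, $z$ runs over $(0,1)$. Moreover $(t-\tau)^a = t^a(1-z)^a$ and $\tau^b = t^b z^b$, since $t>0$ and both bases are positive on the open interval. Collecting the powers of $t$ gives the factor $t^{a}\,t^{b}\,t = t^{a+b+1}$ in front of $\int_0^1 (1-z)^a z^b\,dz$. This is the first equality.

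For the second equality, I recall the definition $\beta(x,y)=\int_0^1 z^{x-1}(1-z)^{y-1}\,dz$ for $x,y>0$, together with its symmetry $\beta(x,y)=\beta(y,x)$. The symmetry follows from the substitution $z\mapsto 1-z$. Taking $x=b+1$ and $y=a+1$ identifies the integral as $\beta(b+1,a+1)=\beta(a+1,b+1)$.

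The only genuine point, and it is a mild one, is convergence. The integrand is nonnegative and continuous on $(0,1)$. Near $z=0$ it behaves like $z^b$, which is integrable exactly when $b>-1$. Near $z=1$ it behaves like $(1-z)^a$, which is integrable exactly when $a>-1$. So the hypothesis $a,b>-1$ is precisely what makes $\beta(a+1,b+1)$ finite. Since the substitution is between improper integrals of nonnegative functions, it is justified by monotone convergence, or simply by applying it on $[\varepsilon, t-\varepsilon]$ and letting $\varepsilon\to0$.

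For $t=0$ the statement is trivial with the natural convention that both sides are $0$. I would mention this only in passing, since the lemma is used for $t>0$.
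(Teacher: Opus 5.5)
Your argument is correct and matches the paper's proof: the substitution $\tau = tz$ for fixed $t>0$, plus a convergence check (integrability at both endpoints exactly because $a,b>-1$) that the paper leaves implicit. The only quibble is your aside on $t=0$: when $a+b+1\le 0$ the right-hand side $t^{a+b+1}\beta(a+1,b+1)$ is not $0$ at $t=0$ (it is $\beta(a+1,b+1)$ under $0^0=1$, or infinite), so it is cleaner to state the lemma only for $t>0$, which is all the paper uses.
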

\begin{proof}
Changing variables $\tau=tz$, we obtain the result. \end{proof}

\begin{cor}\label{cor:intbetatruncated}
If $a,b\in \R$ and $-1<a$, then 
$$\int_{\frac{t}{3}}^t (t-\tau)^a \tau^b d\tau=t^{a+b+1}\int_{\frac{1}{3}}^1 (1-z)^a z^b dz\lesssim  t^{a+b+1}.$$
\end{cor}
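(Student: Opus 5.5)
The plan is to reduce the corollary to Lemma \ref{lemma:intbeta} by the same change of variables, and then bound the resulting integral by a constant depending only on $a$ and $b$. Substituting $\tau=tz$, so that $d\tau=t\,dz$, $(t-\tau)^a=t^a(1-z)^a$ and $\tau^b=t^b z^b$, sends the interval $[t/3,t]$ to $[1/3,1]$. This gives the identity
$$\int_{\frac{t}{3}}^t (t-\tau)^a \tau^b\, d\tau = t^{a+b+1}\int_{\frac13}^1 (1-z)^a z^b\, dz.$$
No condition on $a$ or $b$ is needed for this step; it is just a linear change of variables for $t>0$.

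It remains to show that $I(a,b):=\int_{1/3}^1 (1-z)^a z^b\,dz$ is finite, since it is then a constant independent of $t$. This is the only point where the hypotheses matter, and it explains why $b$ may be arbitrary here while the lemma required $b>-1$. On $[1/3,1]$ the factor $z^b$ is continuous and bounded: by $\max(3^{-b},1)$, since it is monotone between its values $3^{-b}$ at $z=1/3$ and $1$ at $z=1$. The truncation therefore removes the possible singularity at $z=0$. The factor $(1-z)^a$ is integrable near $z=1$ precisely because $a>-1$, with $\int_{1/3}^1(1-z)^a\,dz=\frac{(2/3)^{a+1}}{a+1}$. Hence
$$I(a,b)\le \max(3^{-b},1)\,\frac{(2/3)^{a+1}}{a+1}<\infty,$$
which yields the claimed bound $\lesssim t^{a+b+1}$, with an implicit constant depending only on $a$ and $b$.

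I see no genuine obstacle. The only point deserving care is stating explicitly that the implicit constant depends on $a$ and $b$ and blows up as $a\to-1^+$. This is consistent with the paper's convention for $\lesssim$, since $a$ and $b$ are fixed exponents in each later application.
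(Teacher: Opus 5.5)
Your proposal is correct and follows the same route the paper intends: the paper gives no separate proof, treating the corollary as immediate from the substitution $\tau=tz$ used in Lemma \ref{lemma:intbeta}. You add the explicit finiteness check on $[1/3,1]$, where $z^b\le\max(3^{-b},1)$ and $(1-z)^a$ is integrable because $a>-1$, which is exactly the point left implicit there.
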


We now proceed to define helical symmetry. In the Cartesian coordinate system, consider the rotation matrix about the $x_3$-axis by an angle $\theta\in \mathbb{R}$
	\begin{equation*} \label{rotmat}
		\mathcal{R}_\theta :=
		\begin{bmatrix}
			\cos\theta & -\sin\theta  & 0\\[0.2cm]
			\sin\theta & \cos\theta   & 0\\[0.2cm]
			0          &    0         & 1
		\end{bmatrix},
	\end{equation*}
	and its combination with vertical translations
	$$
	\begin{bmatrix}
		x_1\\
		x_2\\
		x_3
	\end{bmatrix}
	\ \mapsto \mathcal{S}_{\theta,\gamma}\begin{bmatrix}
		x_1\\
		x_2\\
		x_3
	\end{bmatrix}
	:=\mathcal{R}_\theta\begin{bmatrix}
		x_1\\
		x_2\\
		x_3
	\end{bmatrix}+\gamma\begin{bmatrix}
		0\\
		0\\
		\theta
	\end{bmatrix},
	\quad\forall\theta\in\mathbb{R}\, ,\ \gamma \in\R\, .
	$$
\begin{defi}\label{def:helicsym}
We say that a smooth function $f:\mathbb{R}^3\to\mathbb{R}$ has helical symmetry if there exists $\gamma\neq0$ such that $f(x)=f(\mathcal{S}_{\theta,\gamma}x)$ for all $x\in\R^3$ and $\theta\in\mathbb{R}$. We say that a smooth vector field $u:\mathbb{R}^3\to\mathbb{R}^3$ has helical symmetry if there exists $\gamma\neq0$ such that $\mathcal{R}_\theta u(x)=u(\mathcal{S}_{\theta,\gamma}x)$ for all $x\in\R^3$ and $\theta\in\mathbb{R}$.

Denoting by $S_{\theta,\gamma}^{*}$ the pullback by $S_{\theta,\gamma}$, a scalar distribution
$f\in\mathcal{D}'(\mathbb{R}^3)$ is said to have helical symmetry if
$
S_{\theta,\gamma}^{*}f=f$ in $\mathcal{D}'(\mathbb{R}^3)
$
for every $\theta\in\mathbb{R}$. Similarly, a vector-valued distribution
$u\in\mathcal{D}'(\mathbb{R}^3;\mathbb{R}^3)$ is said to have
helical symmetry if
$
S_{\theta,\gamma}^{*}u=R_\theta u$ in $\mathcal{D}'(\mathbb{R}^3;\mathbb{R}^3)
$
for every $\theta\in\mathbb{R}$.
\end{defi}

\begin{cor}
A vector field $u$ has helical symmetry if and only if all its components in cylindrical coordinates are functions with helical symmetry. A smooth function $f$ has helical symmetry with parameter $\gamma\neq 0$ if and only if, when using cylindrical coordinates, it only depends on $\rho$ and $\xi=z-\gamma \theta$. 
\end{cor}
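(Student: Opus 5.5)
The plan is to work in cylindrical coordinates $(\rho,\theta,z)$, which are used both for vector components and for the symmetry itself. The key change of variables is $\xi=z-\gamma\theta$.

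\emph{Vector fields.} Write $u=u_\rho e_\rho+u_\theta e_\theta+u_z e_z$. Since $e_\rho(\theta)$ and $e_\theta(\theta)$ rotate with $\theta$, we have $\mathcal{R}_\phi e_\rho(\theta)=e_\rho(\theta+\phi)$, and similarly for $e_\theta$; $e_z$ is fixed. The map $\mathcal{S}_{\phi,\gamma}$ sends the point $(\rho,\theta,z)$ to $(\rho,\theta+\phi,z+\gamma\phi)$. Hence $\mathcal{R}_\phi u(x)$ equals the sum over components of $u_\bullet(\rho,\theta,z)\,e_\bullet(\theta+\phi)$. On the other hand, $u(\mathcal{S}_{\phi,\gamma}x)$ equals the sum of $u_\bullet(\rho,\theta+\phi,z+\gamma\phi)\,e_\bullet(\theta+\phi)$. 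The vectors $e_\rho,e_\theta,e_z$ are linearly independent at each point, so we may compare coefficients. This gives $\mathcal{R}_\phi u(x)=u(\mathcal{S}_{\phi,\gamma}x)$ for all $\phi$ if and only if each $u_\bullet$ is invariant under $(\rho,\theta,z)\mapsto(\rho,\theta+\phi,z+\gamma\phi)$, i.e. each component is a helically symmetric scalar. The axis $\rho=0$, where the frame degenerates, is handled by continuity for smooth fields: the identity holds on the dense set $\rho>0$.

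\emph{Scalars.} Invariance $f(\rho,\theta,z)=f(\rho,\theta+\phi,z+\gamma\phi)$ for all $\phi$ means that $f$ is constant along the helices $\phi\mapsto(\rho,\theta+\phi,z+\gamma\phi)$. Along each such orbit both $\rho$ and $\xi=z-\gamma\theta$ stay constant, since $z+\gamma\phi-\gamma(\theta+\phi)=z-\gamma\theta$.

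For the forward direction, given $(\rho,\xi)$, every point with these values has the form $(\rho,\theta,\xi+\gamma\theta)$. Choosing $\phi=-\theta$ maps it to $(\rho,0,\xi)$, so $f$ equals $g(\rho,\xi):=f(\rho,0,\xi)$.

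For the converse, if $f=g(\rho,z-\gamma\theta)$, then invariance follows immediately from the computation of $\xi$ along orbits. One also checks that this is well defined: changing $\theta$ by $2\pi$ shifts $\xi$ by $2\pi\gamma$. This is consistent, because invariance with $\phi=2\pi$ gives $f(x)=f(x+2\pi\gamma e_3)$, so $g$ is $2\pi\gamma$-periodic in $\xi$.

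Infinitesimally, the symmetry is equivalent to $(\partial_\theta+\gamma\partial_z)f=0$, which also yields the statement for smooth $f$ via characteristics.

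The main (mild) obstacle is the bookkeeping at $\rho=0$ and the periodicity consistency. Both are resolved by continuity and the $\phi=2\pi$ case.
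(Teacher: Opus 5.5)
The paper gives no proof of this corollary and treats it as an immediate reformulation of Definition~\ref{def:helicsym}. Your argument is correct and is the routine verification one would supply: the frame identity $\mathcal{R}_\phi e_\bullet(\theta)=e_\bullet(\theta+\phi)$ handles the vector part, and the invariance of $(\rho,\xi)$ along the helical orbits handles the scalar part.

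Two points deserve precise statement.

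First, the cylindrical components $u_\rho,u_\theta$ of a smooth helical field are in general not continuous at the axis. For example, $u(x)=\mathcal{R}_{x_3/\gamma}e_1$ is smooth and helical, with $u_\rho=\cos(\xi/\gamma)$ and $u_\theta=\sin(\xi/\gamma)$. So the component statement lives on $\{\rho>0\}$, which is how you implicitly read it.

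Second, in the converse of the scalar part, the $2\pi\gamma$-periodicity of $g$ should come from $f$ being single-valued, that is, from reading $\xi$ modulo $2\pi\gamma$. It should not come from the invariance you are trying to prove; as written, that step is circular in this direction.
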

To simplify the exposition, from now on we fix $\gamma=1$ whenever we assume helical symmetry.

We state the following Poincaré-type inequality to relate the regularity of the velocity to that of the vorticity.   

\begin{lemma}\label{lem:helicalembedding}
Let $1<p<2$. For any $f$ with helical symmetry, $f\in L^q$, $\na f\in L^p$, and $\frac{1}{q}=\frac{1}{p}-\frac{1}{2}$, it holds
$$\|f\|_{L^q}\lesssim \|\na f\|_{L^p}.$$
\end{lemma}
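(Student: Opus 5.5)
The plan is to reduce the statement to the classical two-dimensional Sobolev inequality by using the fact that a helically symmetric function on $\R^2\times\T$ is determined by its trace on a single horizontal slice. With $\gamma=1$, the condition $f(x)=f(\mathcal S_{\theta,1}x)$ applied with $\theta=-x_3$ gives
$$
f(x_h,x_3)=f(\mathcal R_{-x_3}x_h,0)=:g(\mathcal R_{-x_3}x_h),\qquad g(y):=f(y,0),\ y\in\R^2,
$$
where $\mathcal R$ also denotes the planar rotation. This is consistent with $2\pi$-periodicity in $x_3$, so $f$ is a function on $\R^2\times\T$ built from a two-dimensional profile $g$.

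\textbf{Smooth case.} First I assume $f$ is smooth.

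1. For each fixed $x_3$, the map $x_h\mapsto \mathcal R_{-x_3}x_h$ is a measure-preserving isometry of $\R^2$. Hence, by Fubini,
$$
\|f\|_{L^q(\R^2\times\T)}^q=2\pi\,\|g\|_{L^q(\R^2)}^q .
$$
2. By the chain rule, $\nabla_h f(x_h,x_3)=\mathcal R_{x_3}(\nabla g)(\mathcal R_{-x_3}x_h)$, so $|\nabla_h f(x_h,x_3)|=|\nabla g(\mathcal R_{-x_3}x_h)|$. The same change of variables then gives
$$
\|\nabla g\|_{L^p(\R^2)}=(2\pi)^{-1/p}\|\nabla_h f\|_{L^p}\le (2\pi)^{-1/p}\|\nabla f\|_{L^p}.
$$
We discard the vertical derivative $\partial_3 f$, which is a rotational derivative of $g$; it is not needed.
3. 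In particular $g\in L^q(\R^2)$ and $\nabla g\in L^p(\R^2)$ with $1<p<2$ and $\frac1q=\frac1p-\frac12$. The Gagliardo--Nirenberg--Sobolev inequality in $\R^2$ then gives $\|g\|_{L^q(\R^2)}\lesssim\|\nabla g\|_{L^p(\R^2)}$.
4. Combining steps 1--3 yields $\|f\|_{L^q}\lesssim \|\nabla f\|_{L^p}$, with a constant depending only on $p$.

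\textbf{Removing smoothness.} For a general $f$ with helical symmetry, $f\in L^q$ and $\nabla f\in L^p$, I mollify with a kernel $\varphi_\eps(y)=\eps^{-3}\phi(|y_h|/\eps)\psi(y_3/\eps)$, periodized in $y_3$. Since $\mathcal S_{\theta,1}$ is a rotation about the $x_3$-axis composed with a vertical translation, and the kernel is invariant under such rotations, convolution with $\varphi_\eps$ commutes with $\mathcal S_{\theta,1}^*$. Therefore $f_\eps=\varphi_\eps*f$ is smooth and helically symmetric. Moreover $\|f_\eps-f\|_{L^q}\to0$ and $\|\nabla f_\eps\|_{L^p}\le\|\nabla f\|_{L^p}$. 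Applying the smooth case to $f_\eps$ and letting $\eps\to0$ gives the claim.

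\textbf{Main obstacle.} The one delicate point is step 3: the two-dimensional Sobolev inequality holds for $g$ with $\nabla g\in L^p$ only modulo constants. The assumption $g\in L^q$, which follows from $f\in L^q$, is exactly what excludes a nonzero constant. To make this rigorous, I would invoke the standard statement that a function in $L^q(\R^2)$ with gradient in $L^p$ vanishes at infinity in the weak sense, so the inequality applies without subtracting a constant. Alternatively, I would prove it directly by applying the inequality to the cutoffs $\chi_R g$ and letting $R\to\infty$. The error term $\|g\nabla\chi_R\|_{L^p}$ is controlled by $\|g\|_{L^q(R<|y|<2R)}\to0$ via Hölder's inequality, since $\|\nabla\chi_R\|_{L^2}$ is scale-invariant in two dimensions.
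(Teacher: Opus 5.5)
Your proof is correct and is essentially the paper's argument. The paper writes $f(\rho,\theta,z)=\tilde f(\rho,z-\theta)$, integrates out $\theta$, and applies the two-dimensional Sobolev inequality to $\tilde f$ viewed in polar coordinates $(\rho,\xi)$. Your slice profile $g(y)=f(y,0)$ is this same $\tilde f$ up to $\xi\mapsto-\xi$, with $|\nabla_h f|$ matching $(|\partial_\rho\tilde f|^2+\rho^{-2}|\partial_\xi\tilde f|^2)^{1/2}$. Your version also makes explicit the symmetry-preserving mollification and the cutoff argument excluding constants, which the paper leaves implicit.
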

\begin{proof}
We introduce cylindrical coordinates
$
(x_1,x_2,x_3)=(\rho\cos\theta,\rho\sin\theta,z)
$
and define the helical variable
$
\xi := z-\theta.
$
By helical symmetry, there exists a function $\tilde f(\rho ,\xi)$ such that
$
f(\rho,\theta,z)=\tilde f(\rho,z-\theta).
$
Since the change of variables $(\rho,\theta,z)\mapsto(\rho,\theta,\xi)$ has unit Jacobian,
$$
\begin{aligned}
\|f\|_{L^q(\mathbb{R}^2\times \T)}
&= \Big(\int_0^{2\pi}\int_0^{2\pi}\int_0^\infty
|\tilde f(\rho,\xi)|^q \, \rho d\rho d\xi d\theta\Big)^\frac{1}{q}
=\Big(2\pi\int_0^{2\pi}\int_0^\infty
|\tilde f(\rho,\xi)|^q  \rho d\rho d\xi\Big)^\frac{1}{q}\\
&\lesssim \Big(\int_0^{2\pi}\int_0^\infty
(|\pa_\rho \tilde f(\rho,\xi)|^p+|\frac{1}{\rho}\pa_\xi \tilde f(\rho,\xi)|^p) \rho d\rho d\xi\Big)^\frac{1}{p} \\
&\lesssim \Big(\int_0^{2\pi}\int_0^{2\pi}\int_0^\infty
(|\pa_\rho \tilde f(\rho,\xi)|^p+|\frac{1}{\rho}\pa_\xi \tilde f(\rho,\xi)|^p) \rho d\rho d\xi d\theta\Big)^\frac{1}{p},
\end{aligned}
$$
where we have applied the Sobolev embedding in two dimensions in polar coordinates. Undoing the change of variables and using that $\partial_\xi \approx \pa_\theta$ for helically symmetric functions, we conclude
$$
\|f\|_{L^q(\mathbb{R}^2\times \T)}
\lesssim \|\na f\|_{L^p(\mathbb{R}^2\times \T)}.$$
\end{proof}

\begin{rem}\label{uregularity} In particular, if $\omega$ has helical symmetry and $u=\nabla\wedge(-\Delta)^{-1}\omega$,
then, for $1<p<2$ and $\frac1q=\frac1p-\frac12$,
\[
\|u\|_{L^q}
 \lesssim \|\nabla u\|_{L^p}
 \lesssim \|\omega\|_{L^p},
\]
where the last inequality follows from the boundedness of the Riesz
transforms.
\end{rem}

The next lemma provides two inequalities used in the arguments for smoothing and long-time existence.

\begin{lemma}\label{lem:gagliardoniremberg}
Let $f$ be a smooth vector field with helical symmetry. Then,
$$\|f\|^2_{L^\infty}\lesssim \|f\|_{L^4}\|\na f\|_{L^4},$$
and
$$\|f\|^2_{L^4}\lesssim \|f\|_{L^2}\|\na f\|_{L^2}.$$
\end{lemma}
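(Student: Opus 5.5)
The plan is to reduce both inequalities to two-dimensional Gagliardo–Nirenberg (Ladyzhenskaya-type) inequalities in the variables $(\rho,\xi)$, in the same way as in the proof of Lemma \ref{lem:helicalembedding}. Applying the lemma to each cylindrical component of the field reduces the vector case to the scalar one. Indeed, by the Corollary following Definition \ref{def:helicsym}, a helically symmetric field $f$ has cylindrical components $f_\rho,f_\theta,f_z$ that are helically symmetric scalars. Moreover, $|f|$ is pointwise comparable to $|f_\rho|+|f_\theta|+|f_z|$. For the gradients one has $|\nabla f_\rho|+|\nabla f_\theta|+|\nabla f_z|\lesssim |\nabla f|+|f|/\rho$. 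The term $|f|/\rho$ is not pointwise controlled near the axis, so I would rather bypass components. The cleaner route is to note that $|f|$ itself is a helically symmetric scalar, since $|\mathcal R_\theta u|=|u|$, and that $|\nabla|f||\le|\nabla f|$ almost everywhere by Kato's inequality. So it suffices to prove both inequalities for nonnegative helically symmetric scalar functions $g=|f|$ with $\|\nabla g\|_{L^p}\le\|\nabla f\|_{L^p}$.

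For a helically symmetric scalar $g(\rho,\theta,z)=\tilde g(\rho,z-\theta)$, I pass to the variables $(\rho,\theta,\xi)$ as in Lemma \ref{lem:helicalembedding}; this change has unit Jacobian. Then $\|g\|_{L^r(\R^2\times\T)}^r=2\pi\int_0^{2\pi}\int_0^\infty|\tilde g|^r\rho\,d\rho\,d\xi$. Define on $\R^2$ the function $G(y)=\tilde g(|y|,\arg y)$, with $\xi$ playing the role of a polar angle, which is legitimate since $\xi$ is $2\pi$-periodic. Then $\|G\|_{L^r(\R^2)}\approx\|g\|_{L^r(\R^2\times\T)}$. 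Also $|\nabla_y G|^2=|\partial_\rho\tilde g|^2+\rho^{-2}|\partial_\xi\tilde g|^2$. The next point to check is that this is controlled pointwise by $|\nabla g|^2$. In cylindrical coordinates, $|\nabla g|^2=|\partial_\rho g|^2+\rho^{-2}|\partial_\theta g|^2+|\partial_z g|^2$. Here $\partial_\theta g=-\partial_\xi\tilde g$ and $\partial_z g=\partial_\xi\tilde g$, so $\rho^{-2}|\partial_\xi\tilde g|^2\le|\nabla g|^2$. This gives $\|\nabla G\|_{L^p(\R^2)}\lesssim\|\nabla g\|_{L^p(\R^2\times\T)}$.

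With this transfer in hand, the two inequalities follow from their planar versions applied to $G$. The second one is Ladyzhenskaya's inequality $\|G\|_{L^4(\R^2)}^2\lesssim\|G\|_{L^2}\|\nabla G\|_{L^2}$. The first one is the planar Gagliardo–Nirenberg inequality $\|G\|_{L^\infty(\R^2)}^2\lesssim\|G\|_{L^4}\|\nabla G\|_{L^4}$. This holds because in $\R^2$ with $p=4>2$ one has $\|G\|_\infty\lesssim\|G\|_{L^4}^{1-\alpha}\|\nabla G\|_{L^4}^{\alpha}$, where $\alpha$ is fixed by scaling: $0=(1-\alpha)\frac{2}{4}+\alpha(\frac{2}{4}-1)$, which gives $\alpha=\frac12$. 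Moreover, $\|f\|_{L^\infty}=\|G\|_{L^\infty}$ trivially.

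The main obstacle is the regularity of $G$ at the origin $y=0$, i.e. on the axis $\rho=0$. The function $G$ lies in $W^{1,p}_{loc}(\R^2\setminus\{0\})$ with the stated bounds. Since $p>1$ in dimension two, a single point has zero capacity. So $G\in W^{1,p}(\R^2)$, the weak gradient does not charge the origin, and no singular contribution arises. I would verify this by a standard cutoff argument near $0$, or by approximating $f$ with smooth helical fields vanishing near the axis and passing to the limit. Finally, one must restrict to functions decaying at infinity, say $f\in L^4$ or $f\in L^2$ respectively, so that the planar inequalities apply. This is implicit in the statement, since the right-hand sides must be finite.
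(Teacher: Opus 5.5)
Your proposal is correct and follows the paper's route: pass to $(\rho,\xi)$ with $\xi=z-\theta$ as a planar polar angle, control $\rho^{-1}\partial_\xi$ by $\partial_\theta$, and apply the planar Gagliardo--Nirenberg and Ladyzhenskaya inequalities; your reduction to the scalar $|f|$ via Kato's inequality is a clean way to handle the vector case, which the paper leaves implicit. One small slip: a point has positive $p$-capacity in $\R^2$ for $p>2$, so your capacity justification fails for $p=4$, but no such argument is needed, because $|f|$ is constant on the axis and $G$ is therefore locally Lipschitz for smooth $f$ (alternatively, argue first in $W^{1,q}_{loc}$ with $q\le 2$).
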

\begin{proof}
The proof is analogous to that of the previous lemma, with the Gagliardo–Nirenberg interpolation inequality replacing the Sobolev embeddings.
\end{proof}

The following lemma contains an inequality used in the local-existence and smoothing arguments.

\begin{lemma}\label{lemma:heathelicalestimates}
If $f$ has helical symmetry, $s\geq 0$, and $1\leq q\leq p< \infty$, then 
$$\|\Lambda^s e^{t\Delta}f\|_{L^p}\lesssim t^{\frac{1}{p}-\frac{1}{q}-\frac{s}{2}}\|f\|_{L^q}.$$
\end{lemma}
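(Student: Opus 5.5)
The estimate has the two-dimensional heat scaling $t^{\frac1p-\frac1q-\frac s2}$ rather than the three-dimensional $t^{\frac32(\frac1p-\frac1q)-\frac s2}$. The plan is to get it from the helical reduction used in Lemma \ref{lem:helicalembedding} and Lemma \ref{lem:gagliardoniremberg}, combined with the ordinary heat-kernel bounds on $\R^2\times\T$. I will split into short and long times.

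\textbf{Step 1: reduction to $s=0$.} Since $\Lambda^s e^{t\Delta}=\Lambda^s e^{\frac t2\Delta}\,e^{\frac t2\Delta}$, it is enough to prove
\[
\|\Lambda^s e^{t\Delta}g\|_{L^p}\lesssim t^{-s/2}\|g\|_{L^p}
\quad\text{and}\quad
\|e^{t\Delta}f\|_{L^p}\lesssim t^{\frac1p-\frac1q}\|f\|_{L^q}
\]
for helical $f$. The first bound holds for all $g$ on $\R^2\times\T$ with $1\le p<\infty$. To see it, expand in the vertical Fourier series: the multiplier $|\xi|^s e^{-t|\xi|^2}$ with $\xi\in\R^2\times\Z$ has a kernel of $L^1$ norm $\lesssim t^{-s/2}$. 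One can argue by transference from $\R^3$, or by periodizing the $\R^3$ kernel, whose $L^1$ norm is scale invariant. Helical symmetry is preserved by $e^{t\Delta}$ and $\Lambda^s$, since both commute with the isometries $\mathcal S_{\theta,1}$. Hence $e^{\frac t2\Delta}f$ is still helical.

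\textbf{Step 2: large times $t\ge1$.} I will use the helical change of variables $(\rho,\theta,z)\mapsto(\rho,\theta,\xi)$ from the proof of Lemma \ref{lem:helicalembedding}. With it, $\|f\|_{L^r(\R^2\times\T)}=(2\pi)^{1/r}\|\tilde f\|_{L^r(\mathcal H)}$, where $\mathcal H=\{(\rho,\xi)\}$ is a two-dimensional fundamental domain with measure $\rho\,d\rho\,d\xi$.

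Concretely, the plan is to use the interpolation route. First, the Gagliardo--Nirenberg/Sobolev inequalities of Lemmas \ref{lem:helicalembedding} and \ref{lem:gagliardoniremberg} hold for helical functions with two-dimensional exponents. Since the Laplacian is symmetric and Markovian, Nash's argument then gives an ultracontractivity bound $\|e^{t\Delta}f\|_{L^\infty}\lesssim t^{-1/2}\|f\|_{L^2}$ for helical $f$. To run this, use the Nash-type inequality $\|f\|_{L^2}^2\lesssim\|f\|_{L^1}\|\nabla f\|_{L^2}$, which follows by the same 2D reduction. Duality and the semigroup property then yield $L^1\to L^\infty$ with rate $t^{-1}$, and Riesz--Thorin between this and $L^r\to L^r$ contractivity gives $t^{\frac1p-\frac1q}$. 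This interpolation should be carried out on the closed subspace of helical functions, which is invariant.

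\textbf{Step 3: small times $t<1$ (the main obstacle).} Here I expect the three-dimensional rate $t^{\frac32(\frac1p-\frac1q)}$ to be what actually governs the kernel, and it is worse than the 2D rate. So one must check whether the helical constraint improves it. Restricted to helical functions, the operator has a reduced kernel on $\mathcal H$ obtained by integrating the 3D kernel along helices. Near the axis $\rho\lesssim1$ the helices are short, so there is a genuine question of whether the $t^{-1}$ bound survives as $t\to0$.

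If the bound fails for small $t$ in general, I would settle for the combined estimate
\[
\|e^{t\Delta}f\|_{L^p}\lesssim \max\bigl(t^{\frac1p-\frac1q},\,t^{\frac32(\frac1p-\frac1q)}\bigr)\|f\|_{L^q}.
\]
I would verify whether the Nash inequality used in Step 2 holds uniformly, since that would give the stated form for all $t$. Verifying the Nash/Gagliardo--Nirenberg input on all of $\mathcal H$, including near the axis, is where I expect the difficulty to concentrate.
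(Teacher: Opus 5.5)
Your Steps 1 and 2 are sound, but the proposal does not prove the lemma. For $t<1$ you leave the estimate open and conjecture that the three-dimensional rate $t^{\frac32(\frac1p-\frac1q)}$ might govern. You then offer $\max\bigl(t^{\frac1p-\frac1q},t^{\frac32(\frac1p-\frac1q)}\bigr)$ as a fallback, which is not the statement.

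The missing observation is that helical symmetry makes the problem exactly two-dimensional at every scale. Since $f(\mathcal R_\theta x_h,x_3+\theta)=f(x_h,x_3)$, each horizontal slice $f(\cdot,x_3)$ is a rotation of $f(\cdot,0)$. Hence $\|f(\cdot,x_3)\|_{L^r(\R^2)}$ does not depend on $x_3$, and neither does $\|\nabla_h f(\cdot,x_3)\|_{L^r(\R^2)}$, because $|\nabla_h f|$ is rotation-covariant. It follows that $\|f\|_{L^r(\R^2\times\T)}=(2\pi)^{1/r}\|f(\cdot,0)\|_{L^r(\R^2)}$. Applying the two-dimensional Nash inequality slice by slice and integrating in $x_3$ gives $\|f\|_{L^2}^2\lesssim\|f\|_{L^1}\|\nabla f\|_{L^2}$ for all helical $f$, with a uniform constant. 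There is no axis problem: helices are not short near the axis (they have length $2\pi\sqrt{1+\rho^2}$ per period), and the slice argument never uses them anyway.

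Nash's inequality is scale-free. So $\frac{d}{dt}\|e^{t\Delta}f\|_{L^2}^2=-2\|\nabla e^{t\Delta}f\|_{L^2}^2$ yields $\|e^{t\Delta}f\|_{L^2}\lesssim t^{-1/2}\|f\|_{L^1}$ for every $t>0$. Your split into $t\geq1$ and $t<1$ is therefore unnecessary, and Step 3 disappears. One technical point for the interpolation: apply Riesz--Thorin to $e^{t\Delta}P$, where $P$ is the average over the compact group $\{\mathcal S_{\theta,1}\}_{\theta\in[0,2\pi)}$ and is a contraction on every $L^r$. Interpolating on an invariant subspace alone is not automatic.

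For comparison, the paper uses the same slice-constancy more directly. It factors the kernel as $e^{t\Delta}(x)=e^{t\Delta}_{\R^2}(x_1,x_2)\,e^{t\Delta}_{\T}(x_3)$ and applies the $\R^2$ heat estimate on each slice. It then uses Minkowski's inequality for the periodic factor, which is an average and cannot increase the $x_3$-independent slice norm. This gives $t^{\frac1p-\frac1q}$ for all $t>0$ in a few lines, with no Nash, duality or interpolation.

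Your Step 1 is a legitimate alternative to the paper's treatment of derivatives, which uses integer-order kernels plus the moment inequality. You write $\Lambda^se^{t\Delta}=\Lambda^se^{\frac t2\Delta}e^{\frac t2\Delta}$ and bound the $L^1$ norm of the periodized kernel of $\Lambda^se^{\frac t2\Delta}$ by that of the $\R^3$ kernel. This is arguably the cleaner route.
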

\begin{proof}
By \cite[Lemma A.6]{GH-T2023}, we have $\heat{t}(x)=\heat{t}_{\R^2}(x_1,x_2)\heat{t}_\T(x_3)$. Then, we apply Minkowski's Integral Inequality and heat kernel estimates in $\R^2$ to deduce
$$
\begin{aligned}
\|\heat{t}f\|_{L^p}=\|\|\heat{t}f\|_{L^p_{x_1,x_2}}\|_{L^p_{x_3}}\!\lesssim t^{\frac{1}{p}-\frac{1}{q}}\|\|{\heat{t}_\T}f\|_{L^q_{x_1,x_2}}\|_{L^p_{x_3}}
\!\lesssim t^{\frac{1}{p}-\frac{1}{q}}\|f\|_{L^q_{x_3}L^q_{x_1,x_2}}=t^{\frac{1}{p}-\frac{1}{q}}\|f\|_{L^q},
\end{aligned}
$$
where we have used that $\|f\|_{L^q_{x_1,x_2}}$ is independent of $x_3$ due to the symmetry, and therefore $\|f\|_{L^q_{x_1,x_2}}\approx\|f\|_{L^p_{x_3}L^q_{x_1,x_2}}\approx \|f\|_{L^q_{x_3}L^q_{x_1,x_2}}=\|f\|_{L^q}$. 

We observe that for any $s\in \N$, the kernel of $\na^s \heat{t}$ also preserves the separated-variable structure, and therefore the same argument can be repeated for any integer-order derivatives. The estimates for general $s>0$ follow from the moment inequality
for fractional powers of $-\Delta$
\cite[Proposition~6.6.4]{Haase2006}, applied to $e^{t\Delta}f$, which gives 
$$\norm{\Lambda^se^{t\Delta}f}_{L^p}\lesssim\norm{e^{t\Delta}f}_{L^p}^{1-\frac{s}{2m}}
\norm{\Delta^m e^{t\Delta}f}_{L^p}^{\frac{s}{2m}},$$
where $2m> s$ is an even integer. 
\end{proof}

The next definition introduces the Kato-type spaces $X_T^p$, which are Banach spaces that gain integrability.

\begin{defi}\label{KatoSpace}
For any $1< p\leq \infty$, we denote by $X^p_T$ the Kato-type spaces 
$$X^{p}_T=\{f\in \D'((0,T]\times \R^2\times\T):\,t^{1-\frac1p}f\in C((0,T];L^p)\},$$
such that the norm
$$\|f\|_{X^p_T}=\sup_{t\in(0,T]}t^{1-\frac1p}\|f(t)\|_{L^p}$$
is finite.
\end{defi}

These spaces have the following crucial properties for the convolution of the heat kernel with $L^1$ functions.

\begin{lemma}\label{lem:heatsmallXp}
Let $\w_0\in L^1$ with helical symmetry. Then, for any $1< p< \infty,$
$$\heat{t}\w_0\in X^p_T,\quad\mbox{and}\quad\|\heat{t}\w_0\|_{X^p_T}\to 0,\quad\mbox{as}\quad T\to 0^+.$$
\end{lemma}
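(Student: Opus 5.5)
The plan is a density argument, using the helical heat estimate of Lemma \ref{lemma:heathelicalestimates} with $s=0$ and $q=1$:
$$\|\heat{t}f\|_{L^p}\lesssim t^{\frac1p-1}\|f\|_{L^1}.$$
Applied to $\w_0$, this gives $t^{1-\frac1p}\|\heat{t}\w_0\|_{L^p}\lesssim\|\w_0\|_{L^1}$ uniformly in $t$, so $\heat{t}\w_0\in X^p_T$ with a norm bounded by $\|\w_0\|_{L^1}$. This bound is not small as $T\to0^+$, so the smallness has to come from approximation.

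\textbf{Step 1: continuity in time.} For $t>0$, write $\heat{t}\w_0=\heat{(t-s)}\heat{s}\w_0$ with $s<t$. Since $\heat{s}\w_0\in L^p$ by the estimate above, strong continuity of the heat semigroup on $L^p$ gives continuity of $t\mapsto \heat{t}\w_0$ in $L^p$ on $(0,T]$. Multiplying by the continuous weight $t^{1-\frac1p}$ gives $t^{1-\frac1p}\heat{t}\w_0\in C((0,T];L^p)$.

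\textbf{Step 2: approximation.} Given $\eps>0$, choose a helically symmetric $g\in L^1\cap L^p$ with $\|\w_0-g\|_{L^1}<\eps$. A natural choice is $g=\heat{\delta}\w_0$ for small $\delta$. It is helically symmetric because the heat kernel commutes with $\mathcal S_{\theta,1}$ (rotations and vertical translations). It lies in $L^p$ by the lemma, and $\heat{\delta}\w_0\to\w_0$ in $L^1$. An alternative is the truncation $g=\w_0\mathbbm{1}_{|\w_0|\le M}$, which stays helically symmetric because the truncation acts pointwise on the function $\tilde f(\rho,\xi)$. The truncation works for scalars; for vector fields I would truncate by the modulus $|\w_0|$, which is rotation invariant and therefore also preserves the symmetry. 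Divergence-freeness plays no role in this lemma.

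\textbf{Step 3: splitting.} Decompose $\heat{t}\w_0=\heat{t}(\w_0-g)+\heat{t}g$. The first term is controlled by the $L^1\to L^p$ bound:
$$t^{1-\frac1p}\|\heat{t}(\w_0-g)\|_{L^p}\lesssim\eps.$$
The second term is controlled by $L^p$ contractivity, which is the lemma with $q=p$:
$$t^{1-\frac1p}\|\heat{t}g\|_{L^p}\le t^{1-\frac1p}\|g\|_{L^p}\le T^{1-\frac1p}\|g\|_{L^p}.$$
Because $p>1$, the exponent $1-\frac1p$ is positive, so this second bound tends to $0$ as $T\to0^+$. Taking the supremum over $t\in(0,T]$ gives $\limsup_{T\to0^+}\|\heat{t}\w_0\|_{X^p_T}\lesssim\eps$, and since $\eps$ is arbitrary the limit is zero.

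The only delicate point is ensuring that the approximant preserves helical symmetry, since Lemma \ref{lemma:heathelicalestimates} is only available for helically symmetric functions. The mollification choice $g=\heat{\delta}\w_0$ avoids this difficulty cleanly.
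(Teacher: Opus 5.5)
Your proof is correct and follows the paper's argument: split $\heat{t}\w_0=\heat{t}(\w_0-g)+\heat{t}g$ with a helically symmetric $L^1$-approximant $g$, bound the first piece by $\|\w_0-g\|_{L^1}$ via the $L^1\to L^p$ helical heat estimate, and bound the second by $T^{1-\frac1p}\|g\|_{L^p}$. The differences are minor: the paper takes $g\in C^\infty_c$ helically symmetric and simply asserts such approximants exist, whereas your choice $g=\heat{\delta}\w_0$ makes both the symmetry and the $L^1$-convergence immediate; you also check the time continuity $t^{1-\frac1p}\heat{t}\w_0\in C((0,T];L^p)$, which the definition of $X^p_T$ requires and the paper leaves implicit.
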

\begin{proof}
For any $\eps>0$, there exists $\phi\in C^\infty_c$ helically symmetric and such that $\norm{\w_0-\phi}_{L^1}\leq \eps.$ Then,

$$\|\heat{t}\w_0\|_{X^p_T}\leq \|\heat{t}(\w_0-\phi)\|_{X^p_T}+\|\heat{t}\phi\|_{X^p_T}\lesssim \eps + T^{1-\frac{1}{p}}\|\phi\|_{L^p}, $$
where we have used Lemma \ref{lemma:heathelicalestimates}. Since $\eps$ is arbitrary, the proof is concluded.
\end{proof}

\begin{rem}\label{rem:heatsmallXphigherorder} By taking derivatives of the heat kernel we can show that, if $\w_0$ is an integrable function with helical symmetry,
\begin{equation}\label{eq:heatscaling}
t^{1-\frac{1}{p}+\frac{s}{2}}\|\Lambda^s\heat{t}\w_0\|_{L^p}\to 0 \quad \text{as}\quad t\to 0^+,   
\end{equation}
for any $s\geq 0$ and $1\leq p<\infty$, except for the case $(s,p)=(0,1).$
\end{rem}

We finally state the fixed-point result used to establish local-in-time existence.

\begin{prop}[Fixed-point Theorem]\label{prop:fixedpoint}
Let $X$ be a Banach space, $B$ a continuous bilinear map from $X\times X$ to $X$, and $\alpha$ a positive real number such that 
$$\alpha<\frac{1}{4\|B\|},\quad \text{with} \quad \|B\|\coloneqq \sup_{\|f\|,\|g\|\leq 1}\|B(f,g)\|.$$
Then, for any $f_0\in X$ with $\|f_0\|_X<\alpha$, there exists a unique $f$ with $\|f\|_X<2\alpha$ such that
$$f=f_0+B(f,f).$$
\end{prop}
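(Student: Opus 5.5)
This is the classical Picard iteration argument in a Banach space, so the plan is to apply the contraction mapping principle on a closed ball. Define $\Phi(f)=f_0+B(f,f)$ and work on the closed ball $\overline{B}_{R}=\{f\in X:\|f\|_X\le R\}$ for a radius $R$ chosen in $[\,\|f_0\|_X/(1-2\alpha\|B\|)\cdot 1, 2\alpha)$; concretely I will take $R$ slightly less than $2\alpha$, or simply $R=2\alpha'$ with $\|f_0\|_X<\alpha'<\alpha$, so that all inequalities below are strict. Boundedness of $B$ gives $\|B(f,g)\|_X\le\|B\|\,\|f\|_X\|g\|_X$ by bilinearity and the definition of $\|B\|$.

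First I check that $\Phi$ maps the ball into itself. For $\|f\|_X\le R=2\alpha'$ we get $\|\Phi(f)\|_X\le \|f_0\|_X+\|B\|R^2< \alpha'+4\alpha'^2\|B\|<\alpha'+\alpha'=2\alpha'$, using $4\alpha'\|B\|<4\alpha\|B\|<1$. Second, the contraction estimate: by bilinearity $B(f,f)-B(g,g)=B(f-g,f)+B(g,f-g)$, hence $\|\Phi(f)-\Phi(g)\|_X\le \|B\|(\|f\|_X+\|g\|_X)\|f-g\|_X\le 4\alpha'\|B\|\,\|f-g\|_X$, and $4\alpha'\|B\|<1$. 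The Banach fixed-point theorem on the complete metric space $\overline B_{2\alpha'}$ yields a fixed point $f$ with $\|f\|_X\le 2\alpha'<2\alpha$.

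The only step needing a little care is uniqueness in the full open ball $\{\|f\|_X<2\alpha\}$ rather than the smaller ball $\overline B_{2\alpha'}$. If $f,g$ are two solutions with $\|f\|_X,\|g\|_X<2\alpha$, the same identity gives $\|f-g\|_X\le\|B\|(\|f\|_X+\|g\|_X)\|f-g\|_X$ with $\|B\|(\|f\|_X+\|g\|_X)<4\alpha\|B\|<1$, forcing $f=g$. This also shows the fixed point does not depend on the auxiliary choice of $\alpha'$. (If $\|B\|=0$ the statement is trivial with $f=f_0$.)
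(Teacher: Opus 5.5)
Your proof is correct. The paper gives no argument of its own and only cites \cite[Lemma 5.5]{cheminfourier}, whose proof is the same Picard/contraction scheme: the splitting $B(f,f)-B(g,g)=B(f-g,f)+B(g,f-g)$ gives contraction factor at most $4\alpha\|B\|<1$, and your auxiliary radius $2\alpha'$ together with the separate uniqueness check on the open ball of radius $2\alpha$ is a clean way to obtain the strict bound $\|f\|_X<2\alpha$.
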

\begin{proof}
A proof for this proposition can be found in \cite[Lemma 5.5]{cheminfourier}. 
\end{proof}

\section{Local-in-time existence}\label{sec:localexistence}
We use Duhamel's principle and the fact that $u,\w$ are divergence free to rewrite \eqref{eq:vorticityintro} as

\begin{equation}\label{eq:NSDuhamel}
\w(t)=\heat{t}\w_0+\int_0^t\na\heat{(t-\tau)}(\w\otimes u-u\otimes \w)(\tau)d\tau, \quad u=\na\wedge(-\Delta)^{-1}\w.
\end{equation}

\begin{thm}\label{thm:existencesmallp}
Let $\w_0\in L^1$ be helically symmetric and divergence free. Let $4/3\leq p<2$. Then, if $T\ll1$, there exists a unique mild solution $\w$ to the Navier-Stokes equations \eqref{eq:NSDuhamel} in $X^p_T$ with $\|\w\|_{X^p_T}\leq C(p)$, initial datum $\w_0$ and such that
$$\|\w\|_{X^p_t}\to 0 \quad \text{as}\quad t\to 0^+.$$
\end{thm}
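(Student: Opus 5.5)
We set up a fixed point in $X^p_T$ via Proposition \ref{prop:fixedpoint}, writing \eqref{eq:NSDuhamel} as $\w=\heat{t}\w_0+B(\w,\w)$ with
$$B(\w,\eta)(t)=\int_0^t\na\heat{(t-\tau)}\big(\w\otimes u_\eta-u_\eta\otimes\w\big)(\tau)\,d\tau,\qquad u_\eta=\na\wedge(-\Delta)^{-1}\eta.$$
By Lemma \ref{lem:heatsmallXp}, $\|\heat{t}\w_0\|_{X^p_T}\to0$ as $T\to0^+$. So if $\|B\|$ on $X^p_T$ is bounded independently of $T$, we can choose $T$ small so that $\|\heat{t}\w_0\|_{X^p_T}<\alpha<1/(4\|B\|)$. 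This gives existence and uniqueness in the ball of radius $2\alpha$, where $\alpha$ depends only on $p$ through $\|B\|$, so $C(p)=2\alpha$. Applying the same argument on $[0,t]$ for each $t\le T$ shows $\|\w\|_{X^p_t}\le 2\|\heat{s}\w_0\|_{X^p_t}\cdot$(const), hence $\|\w\|_{X^p_t}\to0$.

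\textbf{Key bilinear estimate.} Fix $t$. By Remark \ref{uregularity}, $\|u_\eta(\tau)\|_{L^q}\lesssim\|\eta(\tau)\|_{L^p}$ with $\frac1q=\frac1p-\frac12$; this needs $1<p<2$. Hölder gives $\|\w\otimes u_\eta\|_{L^r}\lesssim \|\w\|_{L^p}\|\eta\|_{L^p}$ with $\frac1r=\frac2p-\frac12$. The condition $r\ge1$ requires $p\ge 4/3$, which is where the lower bound on $p$ comes from, and $r\le p$ holds since $p<2$. Lemma \ref{lemma:heathelicalestimates} with $s=1$ (handling $\na$ via $\Lambda$ and Riesz transforms, or directly via the separated kernel, which treats integer derivatives) gives
$$\|\na\heat{(t-\tau)}F\|_{L^p}\lesssim (t-\tau)^{\frac1p-\frac1r-\frac12}\|F\|_{L^r}=(t-\tau)^{-\frac1p}\|F\|_{L^r}.$$
Combining,
$$t^{1-\frac1p}\|B(\w,\eta)(t)\|_{L^p}\lesssim t^{1-\frac1p}\int_0^t(t-\tau)^{-\frac1p}\tau^{-2+\frac2p}\,d\tau\;\|\w\|_{X^p_t}\|\eta\|_{X^p_t}.$$
Lemma \ref{lemma:intbeta} applies because $-\frac1p>-1$ and $-2+\frac2p>-1 \iff p<2$. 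The resulting power of $t$ is $1-\frac1p+1-\frac1p-2+\frac2p=0$, so $\|B\|\lesssim \beta(1-\frac1p,\frac2p-1)$, independent of $T$.

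\textbf{Main obstacle: time continuity.} We must show that $t^{1-1/p}B(\w,\eta)\in C((0,T];L^p)$, so that $B$ maps into $X^p_T$. I would do this by splitting the integral at $t-h$ and using the continuity of the heat semigroup on $L^p$, together with dominated convergence driven by the integrable singularities above. A further point is that helical symmetry, which Lemma \ref{lemma:heathelicalestimates} and Remark \ref{uregularity} require, must be preserved by $B$. Since the equation is invariant under $\mathcal S_{\theta,1}$, so are the heat kernel and Biot–Savart law, so I would work in the closed subspace of helically symmetric fields in $X^p_T$. Divergence-freeness of $\w$ is preserved by the structure of the nonlinearity.

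Finally, uniqueness in the stated sense (among solutions with $\|\w\|_{X^p_t}\to0$) follows from the fixed-point uniqueness. Any such solution lies in the $2\alpha$-ball on a sufficiently short interval $[0,t_0]$, and uniqueness then propagates by the same contraction estimate.
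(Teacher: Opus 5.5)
Your proposal is correct and follows the paper's proof essentially line by line. You set up the same fixed point in $X^p_T$ via Proposition~\ref{prop:fixedpoint}, use the same exponents $\frac1q=\frac1p-\frac12$ and $\frac1r=\frac2p-\frac12$ (with $p\ge 4/3$ coming from $r\ge1$, and $p<2$ coming from Remark~\ref{uregularity} and Lemma~\ref{lemma:intbeta}), and combine the same $T$-independent bound on $\|B\|$ with Lemma~\ref{lem:heatsmallXp}. The additional points you address are left implicit in the paper, and your treatment of them is sound: time continuity of $B$, preservation of helical symmetry, and obtaining $\|\w\|_{X^p_t}\to0$ from $\|\w\|_{X^p_t}\le \|\heat{s}\w_0\|_{X^p_t}+\|B\|\,\|\w\|_{X^p_t}^2$ together with $2\alpha\|B\|<\tfrac12$.
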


\begin{proof}
Our goal is to apply Proposition \ref{prop:fixedpoint} to \eqref{eq:NSDuhamel} to obtain existence in $X^p_T$. Using the Biot-Savart law, we can see the nonlinear term in \eqref{eq:NSDuhamel} as a bilinear term in $\w$, i.e., 
$$\int_0^t\na\heat{(t-\tau)}(\w\otimes u-u\otimes \w)(\tau)d\tau\equiv B(\w,\w).$$
Note that the norms of $u=\nabla\wedge(-\Delta)^{-1}\omega$ and $\w=\na\wedge u$ can be related by means of Lemma \ref{lem:helicalembedding}. We now check that $B$ is a continuous operator on $X^p_T\times X^p_T \to X^p_T$. 

We take the norm $L^p$ and use lemma \ref{lemma:heathelicalestimates} with $\frac{1}{r}=\frac{1}{p}+\frac{1}{q}$ to get

\begin{align*}
\|B(\w,\w)(t)\|_{L^p}&\lesssim \int_0^t (t-\tau)^{\frac{1}{p}-\frac{1}{r}-\frac{1}{2}}\|u\otimes \w\|_{L^r}d\tau
\lesssim \int_0^t (t-\tau)^{\frac{1}{p}-\frac{1}{r}-\frac{1}{2}}\|u\|_{L^q} \|\w\|_{L^p}d\tau \\
&\lesssim_p \int_0^t (t-\tau)^{\frac{1}{p}-\frac{1}{r}-\frac{1}{2}} \|\w\|^2_{L^p}d\tau.
\end{align*}
In the last inequality above, we use Remark \ref{uregularity} to obtain
$
\frac{1}{q}=\frac{1}{p}-\frac{1}{2},
$
which imposes the condition $p<2$. This in turn gives
$
\frac{1}{r}=\frac{2}{p}-\frac{1}{2},
$
which imposes the second condition on $p$. Now we add the weights in time and use Lemma \ref{lemma:intbeta} to deduce 

$$\|B(\w,\w)(t)\|_{L^p}\lesssim_p \int_0^t (t-\tau)^{-\frac{1}{p}}\tau^{\frac{2}{p}-2}\|\w\|_{X^p_T}^2d\tau\lesssim_p t^{-1+\frac{1}{p}}\|\w\|_{X^p_T}^2,
$$
which implies, taking supremum in $t$,
$$\|B(\w,\w)\|_{X^p_T}\lesssim_p \|\w\|_{X^p_T}^2.$$

Using now Lemma \ref{lem:heatsmallXp}, we can apply Proposition \ref{prop:fixedpoint} to \eqref{eq:NSDuhamel}, concluding the proof.
\end{proof}

\begin{thm}\label{thm:shorttimeWP}
Let $\w_0\in L^1$ be helically symmetric and divergence free. Then, there exists $T>0, C>0$ such that there is a unique mild solution $\omega$ to the Navier-Stokes equations \eqref{eq:NSDuhamel} in $C([0,T];L^1)\cap X^{4/3}_T $, with $\|\w\|_{X^{4/3}_T}<C$ and initial datum $\w_0$. Besides, $\|\w\|_{X^{4/3}_t}\to 0$ as $t\to 0^+$.
\end{thm}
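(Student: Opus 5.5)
Existence and smallness come from Theorem~\ref{thm:existencesmallp} with $p=4/3$: for $T\ll1$ it gives a unique mild solution $\w\in X^{4/3}_T$ with $\|\w\|_{X^{4/3}_T}\le C$ and $\|\w\|_{X^{4/3}_t}\to0$ as $t\to0^+$. What remains is to show $\w\in C([0,T];L^1)$ with $\w(0)=\w_0$, and to obtain uniqueness in the stated class. Uniqueness is in fact immediate: any solution in $C([0,T];L^1)\cap X^{4/3}_T$ with small $X^{4/3}$ norm is the fixed point given by Proposition~\ref{prop:fixedpoint}. To avoid choosing $C$ tied to $2\alpha$, I would instead argue as follows. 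Any two solutions whose $X^{4/3}_t$ norms tend to $0$ have difference $\delta$ satisfying $\delta=B(\delta,\w_1)+B(\w_2,\delta)$. The bilinear bound of Theorem~\ref{thm:existencesmallp} then gives
\[
\|\delta\|_{X^{4/3}_t}\lesssim\bigl(\|\w_1\|_{X^{4/3}_t}+\|\w_2\|_{X^{4/3}_t}\bigr)\|\delta\|_{X^{4/3}_t},
\]
so $\delta=0$ on a short interval $[0,t_0]$. From there I would continue by the same argument restarted at $t_0$, with $L^1\cap L^{4/3}$ data, or by a standard continuation argument.

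The key new estimate is the $L^1$ bound on the Duhamel term. By Lemma~\ref{lemma:heathelicalestimates} with $p=1$ (the proof there works for $q=p=1$, with the gradient of the heat kernel),
\[
\|B(\w,\w)(t)\|_{L^1}\lesssim\int_0^t(t-\tau)^{-1/2}\|u\otimes\w\|_{L^1}\,d\tau .
\]
By Hölder and Remark~\ref{uregularity} with $p=4/3$, $q=4$,
\[
\|u\|_{L^4}\|\w\|_{L^{4/3}}\lesssim\|\w\|_{L^{4/3}}^2\le\tau^{-1/2}\|\w\|_{X^{4/3}_t}^2 .
\]
The exponents are consistent because $\frac14+\frac34=1$. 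Lemma~\ref{lemma:intbeta} then gives $\int_0^t(t-\tau)^{-1/2}\tau^{-1/2}\,d\tau=\beta(\tfrac12,\tfrac12)=\pi$, so
\[
\|B(\w,\w)(t)\|_{L^1}\lesssim\|\w\|_{X^{4/3}_t}^2\to0 \quad\text{as } t\to0^+ .
\]
Since $e^{t\Delta}\w_0\to\w_0$ in $L^1$, this yields $\w(t)\to\w_0$ in $L^1$, which is continuity at $t=0$.

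Continuity at positive times $t_1>0$ is the main technical obstacle. I would write
\[
B(\w,\w)(t)-B(\w,\w)(t_1)=\int_{t_1}^t(\cdots)\,d\tau+\int_0^{t_1}\nabla\bigl(e^{(t-\tau)\Delta}-e^{(t_1-\tau)\Delta}\bigr)(\cdots)\,d\tau .
\]
The first integral is bounded by $\int_{t_1}^t(t-\tau)^{-1/2}\tau^{-1/2}\,d\tau\to0$. For the second, I would use the strong continuity of the heat semigroup in $L^1$ on each fixed integrand, together with dominated convergence in $\tau$. The dominating function is $C(t_1-\tau)^{-1/2}\tau^{-1/2}$, which is integrable; here one uses $\|\nabla e^{(t-\tau)\Delta}g\|_{L^1}\lesssim(t_1-\tau)^{-1/2}\|g\|_{L^1}$ for $t\ge t_1$. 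The linear part $e^{t\Delta}\w_0$ is continuous in $L^1$ by standard semigroup theory. This gives $\w\in C([0,T];L^1)$ and completes the proof.
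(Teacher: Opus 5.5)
\textbf{Verdict: there is a gap, because only right continuity at positive times is proved.} Your existence step, the fixed-point uniqueness and the continuity at $t=0$ are the paper's argument. In particular you use the same bound $\|B(\w,\w)(t)\|_{L^1}\lesssim\int_0^t(t-\tau)^{-1/2}\|u\|_{L^4}\|\w\|_{L^{4/3}}\,d\tau\lesssim\|\w\|^2_{X^{4/3}_t}$. Your treatment of $t\to t_1^+$ is a correct variant of the paper's restart argument. However, your decomposition of $B(\w,\w)(t)-B(\w,\w)(t_1)$ only makes sense for $t\ge t_1$: for $t<t_1$ it contains $e^{(t-\tau)\Delta}$ with $t-\tau<0$. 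So you obtain right continuity only. Right continuity at every point does not give $\w\in C([0,T];L^1)$, so the final sentence of your proof is not justified.

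\textbf{Why the left limit needs more.} For $0<s<t$, with $F=\w\otimes u-u\otimes\w$, one writes
\[
\w(t)-\w(s)=(e^{t\Delta}-e^{s\Delta})\w_0+\int_0^s\bigl(e^{(t-s)\Delta}-I\bigr)\na e^{(s-\tau)\Delta}F(\tau)\,d\tau+\int_s^t\na e^{(t-\tau)\Delta}F(\tau)\,d\tau.
\]
The last term is $O\bigl((t-s)^{1/2}s^{-1/2}\bigr)$. The natural bound for the middle integrand, however, is $C(s-\tau)^{-1/2}\tau^{-1/2}$ on $(0,s)$, and its singularity moves with $s$. As $s\to t^-$ there is no $s$-independent integrable majorant, so dominated convergence does not apply as you set it up.

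\textbf{How to close it.} The paper rescales $\tau=\sigma s$. This turns the middle term into an integral over $(0,1)$ whose integrand is bounded by $C(1-\sigma)^{-1/2}\sigma^{-1/2}\|\w\|^2_{X^{4/3}_T}$, independently of $s$. Pointwise convergence in $\sigma$ then uses strong continuity of the heat semigroup together with continuity of $\tau\mapsto F(\tau)$ in $L^1$ on $(0,T]$. The latter follows from $\w\in C((0,T];L^{4/3})$, which is built into $X^{4/3}_T$. An equally good fix is to split off $\int_{s-\eta}^s$, which is $O(\eta^{1/2}t^{-1/2})$ uniformly for $s$ near $t$. Then apply dominated convergence on $(0,s-\eta)$ with majorant $C\eta^{-1/2}\tau^{-1/2}$. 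Either version must be added to conclude $\w\in C([0,T];L^1)$.
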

\begin{proof}
By Theorem \ref{thm:existencesmallp}, for $T\ll 1$ there exists $C>0$ and a unique solution with $\|\w\|_{X^{\frac{4}{3}}_T}<C.$ Taking $L^1$ norm in \eqref{eq:NSDuhamel}, we deduce  the following bound
\begin{align*}
\|\w(t)-\heat{t}\w_0\|_{L^1}\leq &\int_0^t\|\na\heat{(t-\tau)}(\w\otimes u-u\otimes \w)(\tau)\|_{L^1}d\tau\lesssim \int_0^t (t-\tau)^{-\frac{1}{2}}\|u\otimes \w\|_{L^1}d\tau \\ 
\leq& \int_0^t (t-\tau)^{-\frac{1}{2}}\|u\|_{L^4}\|\w\|_{L^\frac{4}{3}}d\tau
\lesssim \int_0^t (t-\tau)^{-\frac{1}{2}}\|\w\|^2_{L^\frac{4}{3}}d\tau
\lesssim \|\w\|_{X_t^\frac{4}{3}}^2.
\end{align*}
Theorem \ref{thm:existencesmallp} yields $\|\w\|_{X_t^\frac{4}{3}}\to 0$ as $t\to 0^+$, and therefore
$$\|\w(t)-\heat{t}\w_0\|_{L^1}\to 0\quad \text{as}\quad t\to 0^+.$$
Since $\|\heat{t}\w_0- \w_0\|_{L^1}\to 0$ as $t\to 0^+$, this gives that $\|\w(t)- \w_0\|_{L^1}\to 0$ as $t\to 0^+$. This shows the right continuity at 0 together with the time bound of the solution in $L^1$. 

Considering $t>0$ we study now the continuity at this point. Writing
$$
\w(s)=\heat{(s-t)}\w(t)+\int_0^{s-t}\na\heat{(s-t-\tau)}(\w\otimes u-u\otimes \w)(t+\tau)d\tau,
$$
the right continuity, $s\to t^+$, follows as before.
Considering $t>s>0$, we study now the left continuity, i.e., $s\to t^-$. We split
$$
\begin{aligned}
\w(t)-\w(s)=&(e^{t\Delta}-e^{s\Delta})\w_0+\int_0^s \big(e^{(t-s)\Delta}-I\big)\na\heat{(s-\tau)}(\w\otimes u-u\otimes \w)(\tau) d\tau \\
&+\int_s^t \na\heat{(t-\tau)}(\w\otimes u-u\otimes \w)(\tau) d\tau=J_1+J_2+J_3,
\end{aligned}$$
where $I$ denotes the identity operator. The term $J_1$ is continuous in $L^1$ by continuity of the heat kernel. The term $J_3$ is continuous in $L^1$, since
$$\int_s^t \|\na\heat{(t-\tau)}(\w\otimes u-u\otimes \w)(\tau)\|_{L^1} d\tau\lesssim \|\w\|_{X_T^\frac{4}{3}}^2\int_s^t(t-\tau)^{-\frac{1}{2}}\tau^{-\frac{1}{2}}d\tau\lesssim (t-s)^\frac{1}{2}s^{-\frac{1}{2}}.$$
Finally, for the term $J_2$, we apply the change $\sigma s=\tau$ to rewrite it as
$$J_2=\int_0^1 \big(e^{(t-s)\Delta}-I\big)\na\heat{s(1-\sigma)}(\w\otimes u-u\otimes \w)(s\sigma)s\,d\sigma.$$
The integrand above converges pointwise to $0$ as $s\to t^-$ and can be bounded in $L_x^1$ by
$$C(1-\sigma)^{-\frac{1}{2}}\sigma^{-\frac{1}{2}}\|\omega\|^2_{X_T^\frac{4}{3}}\in L^1(0,1),$$
for some constant $C>0$.
Hence, we can apply the Bochner dominated convergence theorem to conclude that $\|J_2\|_{L^1}\to 0$ as $s\to t^-.$
\end{proof}

\begin{rem}
Theorem \ref{thm:shorttimeWP} can be similarly proved for any $4/3\leq p<2$ instead of $4/3$. It is needed to fix one value (or a finite number of values) to obtain a positive $T$. The particular choice is not important as we will show in Theorem \ref{thm:higherregrate} that the solution belongs to any $X^p_T$.
\end{rem}
The following corollary will help us later to show that the solution obtained in Theorem \ref{thm:shorttimeWP} can be uniquely continued for all times.
\begin{cor}\label{cor:uniquenessNSLp}
Let $\w_1,\w_2$ be two helically symmetric mild solutions to \eqref{eq:NSDuhamel} with the same initial data, such that $\w_1,\w_2\in C([0,T];L^1\cap L^{\frac{4}{3}})$. Then, $\w_1=\w_2$ in $[0,T]$. 
\end{cor}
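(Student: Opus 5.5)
We plan to prove uniqueness by reducing it to the uniqueness part of Theorem \ref{thm:existencesmallp}. The key observation is that continuity in $L^{4/3}$ up to $t=0$ makes the $X^{4/3}_t$ norm of each solution tend to zero. Concretely, if $\w_i\in C([0,T];L^{4/3})$, then $\sup_{[0,T]}\|\w_i(t)\|_{L^{4/3}}=:M_i<\infty$, and therefore
$$\|\w_i\|_{X^{4/3}_t}=\sup_{\tau\in(0,t]}\tau^{1/4}\|\w_i(\tau)\|_{L^{4/3}}\le t^{1/4}M_i\to 0 \quad\text{as } t\to0^+.$$
Both solutions also satisfy the weighted continuity requirement in Definition \ref{KatoSpace}, so $\w_1,\w_2\in X^{4/3}_t$ for every $t\le T$.

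\textbf{Step 1 (uniqueness on a short interval).} We use a direct contraction estimate rather than quoting the fixed-point proposition. Set $\delta=\w_1-\w_2$. By bilinearity, $\delta=B(\delta,\w_1)+B(\w_2,\delta)$. The bilinear estimate from the proof of Theorem \ref{thm:existencesmallp} with $p=4/3$ then gives
$$\|\delta\|_{X^{4/3}_t}\le C\big(\|\w_1\|_{X^{4/3}_t}+\|\w_2\|_{X^{4/3}_t}\big)\|\delta\|_{X^{4/3}_t}.$$
That estimate is stated for $B(\w,\w)$, but it holds verbatim for $B(f,g)$ by Remark \ref{uregularity} and Hölder's inequality. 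Choosing $t_0$ small so that $C(\|\w_1\|_{X^{4/3}_{t_0}}+\|\w_2\|_{X^{4/3}_{t_0}})<1$, which is possible by the observation above, yields $\delta\equiv0$ on $[0,t_0]$.

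\textbf{Step 2 (continuation to $[0,T]$).} Let $t^*=\sup\{s\le T:\w_1=\w_2 \text{ on }[0,s]\}$. By continuity in $L^1$, $\w_1(t^*)=\w_2(t^*)=:\w_*\in L^1\cap L^{4/3}$, and this is helically symmetric. Both functions are mild solutions restarted from $t^*$; the semigroup property of $\heat{t}$ shows that the Duhamel formula \eqref{eq:NSDuhamel} holds with initial time $t^*$, exactly as in the proof of Theorem \ref{thm:shorttimeWP}. Since $\w_i(t^*+\cdot)\in C([0,T-t^*];L^{4/3})$, Step 1 applies again, with the time weight now measured from $t^*$. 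This gives agreement on $[t^*,t^*+\eta]$ for some $\eta>0$, which contradicts maximality unless $t^*=T$.

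\textbf{Main obstacle.} The delicate point is only the smallness needed in Step 1. The $X^{4/3}$ norm does not become small for generic $L^1$ data, but here the assumed $L^{4/3}$ continuity at the initial time supplies the factor $t^{1/4}$, so no smallness of the data itself is required. The rest is routine. One must check that restarting Duhamel's formula at $t^*$ is legitimate; this follows because the singularity at $\tau=0$ is integrable, the integrand being bounded by $(t-\tau)^{-1/2}\cdot\tau^{0}$ in $L^1$ when $\w_i\in L^\infty_tL^{4/3}$.
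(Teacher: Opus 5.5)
Your proof is correct and follows essentially the same route as the paper. Both arguments use a contraction estimate for the difference on a short interval, with smallness coming from the factor $t^{1/4}\sup_{[0,T]}\|\w_i\|_{L^{4/3}}$, and then iterate in time. The only cosmetic difference is that you measure the difference in the weighted $X^{4/3}_t$ norm, reusing the bilinear estimate of Theorem \ref{thm:existencesmallp}, whereas the paper works directly with the unweighted quantity $D(t)=\sup_{s\le t}\|\w_1(s)-\w_2(s)\|_{L^{4/3}}$ and the kernel $(t-\tau)^{-3/4}$.
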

\begin{proof}
Let $\eta=\omega_1-\omega_2$ and
$v=u_1-u_2=\nabla\wedge(-\Delta)^{-1}\eta$. Set
$$
M:=\sup_{0\leq t\leq T}
\left(
\|\omega_1(t)\|_{L^{4/3}}
+\|\omega_2(t)\|_{L^{4/3}}
\right)<\infty.
$$
Since the two solutions have the same initial datum, the linear
term vanishes in the equation for $\eta$. Using Lemmas \ref{lem:helicalembedding} and \ref{lemma:heathelicalestimates},
$$
\|\eta(t)\|_{L^{4/3}}\lesssim
M\int_0^t(t-\tau)^{-3/4}\|\eta(\tau)\|_{L^{4/3}}\,d\tau.
$$
If $D(t):=\sup_{0\leq s\leq t}\|\eta(s)\|_{L^{4/3}},$
then
$$
D(t)\lesssim M t^{1/4}D(t).
$$
Consequently, choosing $T_0>0$ sufficiently small so that the
implicit constant times $M T_0^{1/4}$ is strictly smaller than
one, we conclude that $D(T_0)=0$. Thus
$\omega_1=\omega_2$ on $[0,T_0]$. Repeating the argument on successive
intervals of length $T_0$, we conclude that
$\omega_1=\omega_2$ on $[0,T]$.
\end{proof}

In the previous Corollary, the time continuity requirement for mild solutions can be replaced by an $L^\infty$ bound. More precisely, from
$$
\omega\in L^\infty([0,T];L^1\cap L^{4/3}),
$$
one can deduce
$$
\omega\in C([0,T];L^1\cap L^{4/3}).
$$
Indeed
$$
\|(\omega\otimes u-u\otimes\omega)(t)\|_{L^1}
 \lesssim \|\omega(t)\|_{L^{4/3}}\|u(t)\|_{L^4}
 \lesssim \|\omega(t)\|_{L^{4/3}}^2.
$$
Moreover,
$$
\|\nabla e^{(t-\tau)\Delta}(\omega\otimes u-u\otimes\omega)(\tau)\|_{L^1}
 \lesssim (t-\tau)^{-1/2}\|\omega(\tau)\|_{L^{4/3}}^2,
$$
and
$$
\|\nabla e^{(t-\tau)\Delta}(\omega\otimes u-u\otimes\omega)(\tau)\|_{L^{4/3}}
 \lesssim (t-\tau)^{-3/4}\|\omega(\tau)\|_{L^{4/3}}^2.
$$
Since both time singularities are integrable, the continuity follows
from the mild formulation, the strong continuity of the heat
semigroup, and a standard splitting of the Duhamel integral near
$t$.

\subsection{Further regularity and integrability}\label{sec:furtherregularity}

Taking into account the smoothing effect of the Navier-Stokes equations, and more specifically the Serrin condition for regularity \cite{RobinsonRodrigoSadowski16}, one can show that the solutions obtained in the previous section are smooth. However, this does not imply the integrability of higher-order norms.

We now establish decay rates for the higher integrability and differentiability norms of the solution constructed in Theorem \ref{thm:shorttimeWP}. We note that this approach is not as straightforward as incorporating higher-order norms into the fixed-point scheme, as doing so would violate the applicability conditions of Lemma \ref{lemma:intbeta}. Instead, we implement a bootstrapping argument based on \cite{dongli2009}.

\begin{thm}\label{thm:higherregrate}
Let $\w$ be the unique mild solution obtained in Theorem \ref{thm:shorttimeWP}, and $T$ its time of existence. Then, for any $s\geq 0$ and $1\leq p\leq  \infty$,
\begin{equation}\label{eq:scalingsolutionhigherregularity}
t^{1-\frac{1}{p}+\frac{s}{2}}\|\Lambda^s\w(t)\|_{L^p}\lesssim 1  
\end{equation}
for all $0<t\leq T.$ Besides, 
\begin{equation}\label{convergecetozero}
t^{1-\frac{1}{p}+\frac{s}{2}}\| \Lambda^s\w(t)\|_{L^p}\to 0 \text{ as }t\to 0, 
\end{equation}
except for the case $(s,p)=(0,1)$.
\end{thm}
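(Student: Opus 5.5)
The plan is a bootstrap in two stages, first in integrability ($s=0$) and then in regularity. Throughout, the Duhamel integral is split at $t/2$, as in \cite{dongli2009}, so that Lemma \ref{lemma:intbeta} is never needed with a non-integrable singularity. Write $N(\tau)=(\w\otimes u-u\otimes\w)(\tau)$ and
$$\w(t)=\heat{t}\w_0+\int_0^{t/2}\na\heat{(t-\tau)}N(\tau)\,d\tau+\int_{t/2}^t\na\heat{(t-\tau)}N(\tau)\,d\tau=:L+I_1+I_2 .$$
The linear term $L$ already satisfies \eqref{eq:scalingsolutionhigherregularity} and \eqref{convergecetozero} by Lemma \ref{lemma:heathelicalestimates} and Remark \ref{rem:heatsmallXphigherorder}.

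\emph{Step 1 (integrability).} The starting point is $\w\in C([0,T];L^1)\cap X^{4/3}_T$ from Theorem \ref{thm:shorttimeWP}, with $\|\w\|_{X^{4/3}_t}\to0$. By interpolation, $\w\in X^p_T$ for $1<p\le 4/3$, with smallness as $t\to0$. To raise $p$, take $p_0\in[4/3,2)$ with $\w\in X^{p_0}_T$ and estimate $I_1+I_2$ in $L^{p}$ using $\|N\|_{L^r}\lesssim\|u\|_{L^q}\|\w\|_{L^{p_0}}\lesssim\|\w\|_{L^{p_0}}^2$, where $1/q=1/p_0-1/2$ (Remark \ref{uregularity}) and $1/r=2/p_0-1/2$.
\begin{itemize}
\item On $[0,t/2]$ the kernel factor $(t-\tau)^{\frac1p-\frac1r-\frac12}\approx t^{\frac1p-\frac1r-\frac12}$ is harmless, and $\tau^{2/p_0-2}$ is integrable because $p_0<2$.
\item On $[t/2,t]$ the time weight is $\approx t^{2/p_0-2}$, and one needs $\frac1r-\frac1p<\frac12$, i.e.\ the gain in integrability per step is less than $1/2$; Corollary \ref{cor:intbetatruncated} then applies.
\end{itemize}
After finitely many steps we reach any $p<\infty$ with the $X^p$ scaling $t^{1-1/p}$. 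For $p=\infty$ (and $p\ge2$), we use the product $\|u\|_{L^\infty}\|\w\|_{L^p}$. Here $\|u\|_{L^\infty}$ is controlled via Lemma \ref{lem:gagliardoniremberg} as $\|u\|_{L^4}^{1/2}\|\na u\|_{L^4}^{1/2}\lesssim\|\w\|_{L^{4/3}}^{1/2}\|\w\|_{L^4}^{1/2}$, which gives $\|u\|_{L^\infty}\lesssim t^{-1/2}$. One final step then yields $t\|\w\|_{L^\infty}\lesssim1$. Since every estimate carries a factor $\|\w\|_{X^{p_0}_t}\to0$ in front of the bounds, the smallness \eqref{convergecetozero} propagates at each step; the case $p=1$ is excluded exactly because $L$ does not vanish there.

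\emph{Step 2 (derivatives).} The plan is to induct on $s$ in increments of at most $1/2$, say, at the level of $L^p$ for all $p$ simultaneously.
\begin{itemize}
\item In $I_1$ we put all derivatives on the heat kernel: $\|\Lambda^{s}\na\heat{(t-\tau)}N\|_{L^p}\lesssim t^{-\frac{s+1}{2}+\frac1p-\frac1r}\|N\|_{L^r}$. This needs only Step 1 and the integrability of $\tau^{\cdot}$ near $0$.
\item In $I_2$ we write $\Lambda^{s}\na\heat{(t-\tau)}N=\Lambda^{\sigma}\na\heat{(t-\tau)}\Lambda^{s-\sigma}N$ with $\sigma<1$, so the kernel singularity $(t-\tau)^{-\frac{1+\sigma}{2}}$ remains integrable. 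Then $\|\Lambda^{s-\sigma}N\|_{L^p}$ is bounded by the Kato--Ponce inequality on $\R^2\times\T$ (Appendix) as $\|\Lambda^{s-\sigma}\w\|_{L^{p_1}}\|u\|_{L^{p_2}}+\|\w\|_{L^{p_3}}\|\Lambda^{s-\sigma}u\|_{L^{p_4}}$.
\item The terms $\Lambda^{s-\sigma}u$ are reduced to $\Lambda^{s-\sigma-1}\w$ through Riesz transforms when $s-\sigma\ge1$. Otherwise we use Lemma \ref{lem:helicalembedding} together with interpolation, or simply the $L^\infty$ bounds on $u$ already obtained.
\end{itemize}
All of these right-hand sides involve only lower-order quantities, already controlled with the correct scaling by the induction hypothesis. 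Since $\tau\approx t$ on $[t/2,t]$, powers of $t$ match the claimed scaling $t^{-1+\frac1p-\frac s2}$. Smallness is again inherited from the factor $\|\w\|_{X^{p_0}_t}\to0$.

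\emph{Main obstacle.} I expect the delicate point to be the bookkeeping in Step 2: choosing exponents in Kato--Ponce so that each factor has already been bounded, while $u$ is not in $L^p$ for small $p$ (only in $L^q$, $q>2$). The endpoints $p=1$ and $p=\infty$ also need care, since Riesz transforms and Kato--Ponce fail at these exponents. To handle the endpoints, I would first prove everything for $1<p<\infty$. The case $p=\infty$ then follows from Lemma \ref{lem:gagliardoniremberg} applied to $\Lambda^s\w$ (using $L^4$ bounds on $\Lambda^s\w$ and $\Lambda^{s+1}\w$). The case $p=1$ with $s>0$ follows by taking the $L^1$ norm directly in Duhamel: the nonlinearity is placed in $L^1$ via Hölder with $L^{4/3}\times L^4$ pairs, and the same split at $t/2$ is used.
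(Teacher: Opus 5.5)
Your proposal is correct and follows essentially the paper's route. Both arguments are a Dong--Li-type bootstrap that localizes the Duhamel integral near $t$. The paper restarts the mild formulation at $\sigma\in(t/3,2t/3)$, which by the semigroup property amounts to your split into $L+I_1$ and $I_2$. It then runs a half-step induction with $\Lambda^{1/2}$ on the kernel and Kato--Ponce on the nonlinearity, and uses Lemma \ref{lem:gagliardoniremberg} for $p=\infty$.

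One small correction: the appendix's Kato--Ponce lemma does allow target exponent $p=1$ (it covers $\frac12<p<\infty$ with inputs in $(1,\infty)$). Your separate $L^1$ treatment is therefore fine, but the fractional Leibniz rule it implicitly uses there is available, and the paper handles $p=1$ uniformly with $1<p<\infty$.
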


\begin{proof}
The interpolation inequality
$$
t^{1-\frac{1}{p}}\|\omega(t)\|_{L^p}
\leq
\|\omega(t)\|_{L^1}^{\frac{4}{p}-3}
(t^{\frac14}\|\omega(t)\|_{L^{4/3}})^{4(1-\frac{1}{p})},
\qquad 1\leq p\leq \frac{4}{3},
$$
together with Theorem \ref{thm:shorttimeWP}, allows to extend the result to the case $s=0$ and $1<p<\frac{4}{3}$, thereby obtaining property \eqref{convergecetozero} as the same holds for $\norm{\w}_{X_t^{4/3}}$.

Next, we fix $4/3<p<2$, $0<t\leq T$, and $t/3<\sigma<2t/3,$ so that $\sigma\approx (t-\sigma)\approx t$. We take the $L^p$ norm in \eqref{eq:NSDuhamel}, and then we apply Lemma \ref{lemma:heathelicalestimates}, Theorem \ref{thm:shorttimeWP} and Remark \ref{uregularity} to deduce
\begin{equation}\label{eq:L2regularityestimate}
\begin{aligned}
\|\w(t)\|_{L^p}\leq& \|\heat{(t-\sigma)}\w(\sigma)\|_{L^p}+\int_\sigma^t\|\na\heat{(t-\tau)}(\w\otimes u-u\otimes \w)(\tau)\|_{L^p}d\tau\\
\lesssim & (t-\sigma)^{\frac{1}{p}-\frac{3}{4}}\|\omega(\sigma)\|_{L^{\frac{4}{3}}}+\int_\sigma^t (t-\tau)^{\frac{1}{p}-{1}-\frac{1}{2}}\|\w(\tau)\|_{L^\frac{4}{3}}\|u(\tau)\|_{L^4}d\tau \\
\lesssim & (t-\sigma)^{\frac{1}{p}-\frac{3}{4}}\sigma^{-\frac{1}{4}}\|\omega\|_{X_t^{\frac{4}{3}}}+\|\omega\|^2_{X_t^{\frac{4}{3}}}\int_\sigma^t (t-\tau)^{\frac{1}{p}-\frac{3}{2}}\tau^{-\frac{1}{4}}\tau^{-\frac{1}{4}}d\tau\\
\lesssim & t^{\frac{1}{p}-1}\|\omega\|_{X_t^{\frac{4}{3}}}+ t^{\frac{1}{p}-1}\|\omega\|^2_{X_t^{\frac{4}{3}}}\int_{\frac{1}{3}}^1 (1-z)^{\frac{1}{p}-\frac{3}{2}}z^{-\frac{1}{2}}dz.
\end{aligned}
\end{equation}
The inequality above provides the bound of $\norm{\w}_{X_t^p}$ for any fixed $4/3<p<2$, and this norm goes to 0 as $t\to0^+$.

We now extend the estimate to any $2<p<\infty$.
The linear part can be estimated as in \eqref{eq:L2regularityestimate}. For the nonlinear term, to ensure the boundedness of the time integral, we choose $q=\frac{2p}{p+1}$ when applying Lemma \ref{lemma:heathelicalestimates}. This gives
$$
\begin{aligned}
\|\w(t)\|_{L^p}\leq& \|\heat{(t-\sigma)}\w(\sigma)\|_{L^p}+\int_\sigma^t\|\na\heat{(t-\tau)}(\w\otimes u-u\otimes \w)(\tau)\|_{L^p}d\tau\\
\lesssim & t^{\frac{1}{p}-1}\|\omega\|_{X_t^{\frac{4}{3}}}+\int_\sigma^t (t-\tau)^{\frac{1}{p}-\frac{p+1}{2p}-\frac{1}{2}}\|\w(\tau)\|_{L^\frac{4p}{2p+1}}\|u(\tau)\|_{L^{4p}}d\tau\\
\lesssim & t^{\frac{1}{p}-1}\|\omega\|_{X_t^{\frac{4}{3}}}+ t^{\frac{1}{p}-1}\|\w\|^2_{X_t^\frac{4p}{2p+1}}\int_{\frac{1}{3}}^1 (1-z)^{\frac1{2p}-1}z^{\frac{1}{2p}-1}dz.
\end{aligned}$$
As before, we obtain the desired bound.
By interpolation, the result for $p=2$ is also obtained, concluding the proof for $s=0$ and $1\leq p<\infty.$

We now turn to higher-order derivatives. Lacking a priori estimates for derivative norms, we cannot directly include the target norm on the right-hand side of our estimates. Crucially, applying full derivatives to the nonlinear term yields a heat kernel exponent that violates the boundedness requirement from Corollary \ref{cor:intbetatruncated}. This singularity imposes the introduction of fractional derivatives.

Fix $1\leq p<\infty.$ We apply $\Lambda^\frac{1}{2}$ to \eqref{eq:NSDuhamel} and take $L^p$ norm. Proceeding as before, the linear term can be bounded using Remark \ref{rem:heatsmallXphigherorder}. For the nonlinear term, we can repeat the previous strategy as long as we choose the different norms properly. More specifically, we can take
$$
\begin{aligned}
\|\Lambda^\frac{1}{2}\w(t)\|_{L^p}\leq& \|\Lambda^\frac{1}{2}\heat{(t-\sigma)}\w(\sigma)\|_{L^p}+\int_\sigma^t\|\na\Lambda^{\frac{1}{2}}\heat{(t-\tau)}(\w\otimes u-u\otimes \w)(\tau)\|_{L^p}d\tau\\
\lesssim & t^{\frac{1}{p}-\frac{5}{4}}\|\omega\|_{X_t^{\frac{4}{3}}}+\int_\sigma^t (t-\tau)^{\frac{1}{p}-\frac{1}{p}-\frac{3}{4}}\|\w(\tau)\|_{L^{\frac{p(p+2)}{2}}}\|u(\tau)\|_{L^{p+2}}d\tau\\
\lesssim & t^{\frac{1}{p}-\frac{5}{4}}\|\omega\|_{X_t^{\frac{4}{3}}}+ t^{\frac{1}{p}-\frac{5}{4}}\|\omega\|_{X_t^{\frac{p(p+2)}{2}}}\|\omega\|_{X_t^{\frac{2p+4}{p+4}}}\int_{\frac{1}{3}}^1 (1-z)^{-\frac{3}{4}}z^{\frac{1}{p}-\frac{3}{2}}dz,
\end{aligned}$$
to obtain the desired result for $s=1/2$.

We now assume that the result is proved for any $1\leq p<\infty$ and $s=k/2$, for some $k\in \N$. We proceed by induction in $k$.  We apply $\Lambda^{\frac{k+1}{2}}$ to \eqref{eq:NSDuhamel}. To deal with the nonlinear term, we apply $k/2$ derivatives to $\omega\otimes u-u\otimes \w$, and $1/2$ to the heat kernel. We note that, as in Remark \ref{uregularity}, for any $s\geq 0$ and $1<p<2$, with $q$ defined by
$
\frac{1}{q}=\frac{1}{p}-\frac{1}{2},
$
we have
\begin{equation}\label{eq:bounduwgeneral}
\|\Lambda^s u\|_{L^q}\lesssim_q\|\Lambda^s\w\|_{L^p}.
\end{equation} Then, using Lemma \ref{lem:KatoPonce} we can conclude 

$$
\begin{aligned}
\|\Lambda^\frac{k+1}{2}\w(t)\|_{L^p}\leq& \|\Lambda^\frac{k+1}{2}\heat{(t-\sigma)}\w(\sigma)\|_{L^p}+\int_\sigma^t\|\na\Lambda^{\frac{1}{2}}\heat{(t-\tau)}\Lambda^\frac{k}{2}(\w\otimes u-u\otimes \w)(\tau)\|_{L^p}d\tau\\
\lesssim & t^{\frac{1}{p}-1-\frac{k+1}{4}}\|\omega\|_{X_t^{\frac{4}{3}}}+\int_\sigma^t (t-\tau)^{\frac{1}{p}-\frac{1}{p}-\frac{3}{4}}\big(\|\Lambda^\frac{k}{2}\w(\tau)\|_{L^{\frac{p(p+2)}{2}}}\|u(\tau)\|_{L^{p+2}}\\
&\hspace{5cm}+\|\w(\tau)\|_{L^{\frac{p(p+2)}{2}}}\|\Lambda^\frac{k}{2}u(\tau)\|_{L^{p+2}}\big)d\tau\\
\lesssim & t^{\frac{1}{p}-1-\frac{k+1}{4}}\|\omega\|_{X_t^{\frac{4}{3}}}+ t^{\frac{1}{p}-1-\frac{k+1}{4}}\int_{\frac{1}{3}}^1 (1-z)^{-\frac{3}{4}}z^{\frac{1}{p}-\frac{3}{2}-\frac{k}{4}}dz.
\end{aligned}$$
By interpolation, we obtain the result for any $s\in(0,\infty)$. 
For $p=\infty$, we apply Lemma \ref{lem:gagliardoniremberg} to
$f=\Lambda^s\omega$. Using also the boundedness of the Riesz
transforms in $L^4$, we obtain
$$
\begin{aligned}
t^{1+\frac s2}\|\Lambda^s\omega(t)\|_{L^\infty}
&\lesssim
\left(
t^{\frac34+\frac s2}
\|\Lambda^s\omega(t)\|_{L^4}
\right)^{\frac12}
\left(
t^{\frac54+\frac s2}
\|\Lambda^{s+1}\omega(t)\|_{L^4}
\right)^{\frac12}\lesssim 1.
\end{aligned}
$$
\end{proof}

\begin{rem}\label{rem:timecontinuity}
The mild formulation and the estimates above also imply that
$$
\Lambda^s\omega\in C((0,T];L^p),
\qquad s\geq0,\quad 1\leq p\leq \infty.
$$
Indeed, on every interval $[t_0,T]$ with $t_0>0$, one can restart
the mild formulation and argue as in
the discussion following Corollary \ref{cor:uniquenessNSLp}, after applying the corresponding
spatial derivatives. Consequently, except when $(s,p)=(0,1)$, the map
$$
t\longmapsto t^{1-\frac1p+\frac s2}\Lambda^s\omega(t)
$$
extends continuously to $[0,T]$ by assigning the value zero at
$t=0$. In the case $(s,p)=(0,1)$, the extension at $t=0$ is given by
$\omega_0$.
\end{rem}

\begin{rem}
Theorem \ref{thm:higherregrate} does not provide explicit control over the implicit constants. However, a more technical framework developed in \cite{dongli2009} achieves this explicit dependence, thereby enabling a proof that both velocity and vorticity remain analytic in space for every fixed $0 < t \leq T$. For simplicity, we restrict ourselves to proving this simpler version.
\end{rem}

\section{Global-in-time well-posedness}\label{sec:globalintime}
In this section, we extend the solution obtained in the previous section globally in time, while keeping the boundedness of all the norms considered. The strategy is to split the smooth solution obtained in the previous section in two parts. One part will be small, and the other will have finite energy. We will first solve Navier-Stokes for large times with the small part as initial data. Then, we will solve the remaining equation for the finite energy part by using energy estimates. Corollary \ref{cor:uniquenessNSLp} will guarantee that the decomposition is well-defined.

\begin{prop}\label{prop:longtimesmallpart}
Let $T>0$. Then, there exists $\eps>0$ such that for any divergence-free $\w_0$ with helical symmetry and satisfying
\begin{equation}\label{eq:smallnesscondition}
\|\w_0\|_{L^\frac{4}{3}\cap L^4}<\eps,
\end{equation}
there is a unique mild solution $\w$ to \eqref{eq:NSDuhamel} in $[0,T]$ with helical symmetry such that 
\begin{equation}\label{eq:smallnessproperty}
\|\w \|_{L^\infty_T(L_x^{\frac{4}{3}}\cap L_x^4)}<2\eps.
\end{equation}
\end{prop}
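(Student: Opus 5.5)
We will apply Proposition \ref{prop:fixedpoint} to \eqref{eq:NSDuhamel} in the Banach space
$$Y_T:=L^\infty\big((0,T);L^{\frac43}\cap L^4\big),\qquad \|\w\|_{Y_T}:=\sup_{0<t<T}\big(\|\w(t)\|_{L^{4/3}}+\|\w(t)\|_{L^4}\big),$$
restricted to helically symmetric divergence-free fields. Helical symmetry is preserved by the heat semigroup, the Biot--Savart law and the bilinear term, so the fixed point stays helical. The time weights are not needed here, since the data already lie in $L^{4/3}\cap L^4$. The linear term satisfies $\|\heat{t}\w_0\|_{Y_T}\le\|\w_0\|_{L^{4/3}\cap L^4}<\eps$ by Lemma \ref{lemma:heathelicalestimates} with $p=q$ and $s=0$.

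The key step is the bound $\|B(\w,\w)\|_{Y_T}\le C_T\|\w\|_{Y_T}^2$, where $C_T$ may grow with $T$ (this is harmless, because $\eps$ is allowed to depend on $T$). I split into two estimates.
\begin{itemize}
\item[(a)] The $L^{4/3}$ norm. Exactly as in Theorem \ref{thm:existencesmallp} with $p=4/3$, we have $\|u\|_{L^4}\lesssim\|\w\|_{L^{4/3}}$ by Remark \ref{uregularity}, so $\|u\otimes\w\|_{L^1}\lesssim\|\w\|_{L^{4/3}}^2$. Then
$$\|B(t)\|_{L^{4/3}}\lesssim\int_0^t(t-\tau)^{\frac34-1-\frac12}\|\w\|_{L^{4/3}}^2\,d\tau\lesssim t^{1/4}\|\w\|_{Y_T}^2 .$$
\item[(b)] The $L^4$ norm. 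Here I need $u\in L^\infty$, or some $L^r$ with $r$ large, controlled by $\w\in L^{4/3}\cap L^4$. Remark \ref{uregularity} gives $u\in L^q$ for every $4\le q<\infty$, using $\w\in L^p$ with $4/3\le p<2$, which follows from interpolation. Taking for instance $\|u\|_{L^8}\lesssim\|\w\|_{L^{8/5}}$ and $\|\w\|_{L^4}$, Hölder gives $\|u\otimes\w\|_{L^{8/3}}\lesssim\|\w\|_{Y_T}^2$. Lemma \ref{lemma:heathelicalestimates} with $q=8/3$, $p=4$ and one derivative then gives the kernel exponent $\frac14-\frac38-\frac12=-\frac58$, which is integrable. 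Hence $\|B(t)\|_{L^4}\lesssim t^{3/8}\|\w\|_{Y_T}^2$.
\end{itemize}
Combining (a) and (b) gives $\|B\|\lesssim T^{1/4}+T^{3/8}$ on $[0,T]$. Choosing $\eps<\big(4\|B\|\big)^{-1}$ and applying Proposition \ref{prop:fixedpoint} yields a solution with $\|\w\|_{Y_T}<2\eps$. Uniqueness in the ball is part of the fixed-point statement. Uniqueness among all solutions in this class, with the extra $C([0,T];L^1)$ available, follows from Corollary \ref{cor:uniquenessNSLp}.

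Continuity in time is checked as in the remark after Corollary \ref{cor:uniquenessNSLp}, so that $\w$ is genuinely a mild solution in $C([0,T];L^{4/3}\cap L^4)$. The main obstacle is step (b). Lemma \ref{lemma:heathelicalestimates} only lowers integrability in the direction $q\le p$, and the $L^4$ estimate needs the product $u\otimes\w$ in some $L^q$ with $q>2$ so that the time singularity $(t-\tau)^{\frac1p-\frac1q-\frac12}$ is integrable. Remark \ref{uregularity} only reaches $u\in L^q$ with $q<\infty$, not $L^\infty$, so the Hölder exponents must be chosen carefully, as above. If this choice fails, the fallback is Lemma \ref{lem:gagliardoniremberg}, which would bound $u$ in $L^\infty$ through $\|u\|_{L^4}\|\na u\|_{L^4}\lesssim\|\w\|_{L^{4/3}}\|\w\|_{L^4}$.
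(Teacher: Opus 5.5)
Your proposal is correct and follows the paper's proof: a fixed point via Proposition \ref{prop:fixedpoint} in $L^\infty_T(L^{4/3}_x\cap L^4_x)$, using the same $L^{4/3}$ estimate with kernel exponent $-\frac34$ and gain $t^{1/4}$. The only difference is the $L^4$ step, where the paper uses $\|u\otimes\omega\|_{L^2}\le\|u\|_{L^4}\|\omega\|_{L^4}$ with exponent $\frac14-\frac12-\frac12=-\frac34$, which is integrable; so the obstacle you describe ($q>2$) is not real, since any $q>4/3$ works, though your choice $q=8/3$ via $\|u\|_{L^8}\lesssim\|\omega\|_{L^{8/5}}$ is also valid.
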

\begin{proof}
We will apply a fixed-point scheme in the space $L^\infty((0,T);L^\frac{4}{3}\cap L^4)$ using equation \eqref{eq:NSDuhamel}. We bound

$$
\begin{aligned}
\|\w(t)\|_{L^{\frac{4}{3}}}
\lesssim & \|\w_0\|_{L^\frac{4}{3}}+\int_0^t (t-\tau)^{\frac{3}{4}-1-\frac{1}{2}}\|\w(\tau)\|_{L^{\frac{4}{3}}}\|u(\tau)\|_{L^4}d\tau\\
\lesssim & \|\w_0\|_{L^\frac{4}{3}}+\|\w\|^2_{L^\infty_TL_x^{\frac{4}{3}}}\int_0^t (t-\tau)^{-\frac{3}{4}}d\tau\\
\lesssim & \|\w_0\|_{L^\frac{4}{3}}+\|\w\|^2_{L^\infty_TL_x^{\frac{4}{3}}} t^{\frac{1}{4}}\leq \|\w_0\|_{L^\frac{4}{3}}+\|\w\|^2_{L^\infty_TL_x^{\frac{4}{3}}} T^{\frac{1}{4}}.
\end{aligned}
$$
Analogously, 
$$
\begin{aligned}
\|\w(t)\|_{L^{4}}
\lesssim & \|\w_0\|_{L^4}+\int_0^t (t-\tau)^{\frac{1}{4}-\frac{1}{2}-\frac{1}{2}}\|\w(\tau)\|_{L^{4}}\|u(\tau)\|_{L^4}d\tau\\
\lesssim & \|\w_0\|_{L^4}+\|\w\|_{L^\infty_TL_x^4}\|\w\|_{L^\infty_TL_x^{\frac43}} t^{\frac{1}{4}}\leq \|\w_0\|_{L^4}+\|\w\|_{L^\infty_TL_x^4}\|\w\|_{L^\infty_TL_x^{\frac{4}{3}}}T^{\frac{1}{4}}.
\end{aligned}
$$
Thus, we can apply Proposition \ref{prop:fixedpoint} as long as $\|\w_0\|_{L^\frac{4}{3}\cap L^4}<CT^{-\frac{1}{4}}$, for some constant $C$.

\end{proof}
If we decompose $u=u_1+u_2$ and $u_1$ solves the Navier-Stokes equations \eqref{eq:NSintro}, then $u_2$ satisfies
\begin{equation}\label{eq:NSperturbedvelocity}
\begin{aligned}
&\pa_t u_2+\na\cdot (u_2\otimes u_2)+\na\cdot (u_1\otimes u_2)+\na\cdot (u_2\otimes u_1)-\Delta u_2=\nabla P_2,
\end{aligned}
\end{equation}
for some pressure $P_2$. The following Proposition ensures that \eqref{eq:NSperturbedvelocity} is well-posed in $H^1$.

\begin{prop}\label{prop:perturbedNSlongtimeWP}
Let $T>0$ and $\omega_1\in L^\infty_T(L^{4/3}_x\cap L^4_x)$ be divergence-free and helically symmetric, with associated Biot-Savart velocity $u_1=\nabla\wedge(-\Delta)^{-1}\omega_1$. 
Consider $u_2(0)\in H^1$ divergence-free and helically symmetric. Then, there exists a divergence-free and helically symmetric $u_2\in L^\infty_TH^1_x$ solving \eqref{eq:NSperturbedvelocity}. 
\end{prop}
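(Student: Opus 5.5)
We construct $u_2$ by a Galerkin (or mollification) scheme that preserves helical symmetry. The a priori bounds come from $L^2$ and $H^1$ energy estimates for \eqref{eq:NSperturbedvelocity}, and compactness gives the limit. A convenient approximation is to mollify $u_1$ in space by a helically symmetric (radial-in-$x_h$, even in $x_3$) mollifier and to solve the regularized equation by a local fixed point in $H^1$. Helical symmetry is preserved because the equation is invariant under $\mathcal S_{\theta,1}$, and by uniqueness the solution inherits the symmetry of the data. Global existence of the approximations then follows from the uniform estimates below, and we pass to the limit by Aubin–Lions on bounded sets of $\R^2\times\T$ together with weak-$*$ compactness in $L^\infty_TH^1_x$. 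The key ingredient is uniform control of $u_1$: by Remark \ref{uregularity} with $p=4/3$,
$$\|u_1\|_{L^\infty_T L^4_x}\lesssim \|\w_1\|_{L^\infty_T L^{4/3}_x},\qquad \|\nabla u_1\|_{L^\infty_T L^{4}_x}\lesssim \|\w_1\|_{L^\infty_T L^{4}_x},$$
where the second bound uses Riesz transforms.

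\textbf{$L^2$ estimate.} We test \eqref{eq:NSperturbedvelocity} with $u_2$. The terms $\na\cdot(u_2\otimes u_2)$ and $\na\cdot(u_1\otimes u_2)$ vanish by incompressibility, as does the pressure term. What remains is $\int (u_2\cdot\na u_1)\cdot u_2$, which we bound in two ways. The first is
$$\|\na u_1\|_{L^2}\|u_2\|_{L^4}^2\lesssim \|\na u_1\|_{L^2}\|u_2\|_{L^2}\|\na u_2\|_{L^2},$$
using the second inequality of Lemma \ref{lem:gagliardoniremberg}, where $\|\na u_1\|_{L^2}\lesssim\|\w_1\|_{L^2}\le \|\w_1\|_{L^{4/3}\cap L^4}$ by interpolation. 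The second, which avoids $L^2$ of $\na u_1$, is to integrate by parts and write the term as $-\int (u_2\otimes u_2):\na u_1$, then use $\|\na u_1\|_{L^4}\|u_2\|_{L^{8/3}}^2$ and interpolate. Either way, Young's inequality gives
$$\tfrac{d}{dt}\|u_2\|_{L^2}^2+\|\na u_2\|_{L^2}^2\lesssim \|\w_1\|_{L^2}^2\|u_2\|_{L^2}^2,$$
and Grönwall yields $u_2\in L^\infty_TL^2\cap L^2_T\dot H^1$.

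\textbf{$H^1$ estimate.} This is the main obstacle. We work at the level of the vorticity $\w_2=\na\wedge u_2$, or equivalently test with $-\Delta u_2$. The quadratic term $\int (u_2\cdot\na u_2)\cdot\Delta u_2$ is controlled, as in the 2D-like helical theory, by
$$\|u_2\|_{L^4}\|\na u_2\|_{L^4}\|\Delta u_2\|_{L^2}\lesssim \|u_2\|_{L^2}^{1/2}\|\na u_2\|_{L^2}\|\Delta u_2\|_{L^2}^{3/2},$$
again by Lemma \ref{lem:gagliardoniremberg}. Young's inequality then gives the bound $\tfrac14\|\Delta u_2\|^2_{L^2}+C\|u_2\|_{L^2}^2\|\na u_2\|_{L^2}^4$, which is Grönwall-admissible because $\|\na u_2\|_{L^2}^2\in L^1_T$. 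The cross terms are $\int (u_1\cdot\na u_2)\cdot\Delta u_2$ and $\int(u_2\cdot\na u_1)\cdot\Delta u_2$. The first is bounded by $\|u_1\|_{L^4}\|\na u_2\|_{L^4}\|\Delta u_2\|_{L^2}\lesssim \|u_1\|_{L^4}\|\na u_2\|_{L^2}^{1/2}\|\Delta u_2\|_{L^2}^{3/2}$. The second is bounded by $\|u_2\|_{L^4}\|\na u_1\|_{L^4}\|\Delta u_2\|_{L^2}$, where $\|u_2\|_{L^4}\lesssim\|u_2\|_{H^1}$. After absorbing with Young's inequality we obtain
$$\tfrac{d}{dt}\|\na u_2\|_{L^2}^2+\|\Delta u_2\|^2_{L^2}\lesssim\big(1+\|u_1\|_{L^4}^4+\|\na u_1\|_{L^4}^2+\|u_2\|_{L^2}^2\|\na u_2\|^2_{L^2}\big)\|u_2\|_{H^1}^2,$$
and Grönwall, together with the $L^2$ bound, gives $u_2\in L^\infty_TH^1_x$ uniformly in the approximation.

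The delicate point is checking that Lemma \ref{lem:gagliardoniremberg} applies to $\na u_2$, which is helical since derivatives of helical fields remain helical componentwise in cylindrical coordinates up to rotation. We also need to justify the integrations by parts for infinite-energy $u_1$; this is why we use only $L^4$ bounds on $u_1$ and $\na u_1$ and never its $L^2$ norm.
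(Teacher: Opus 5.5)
Your proposal is correct and is essentially the paper's argument. It uses the same $L^2$ estimate: only $\int (u_2\cdot\nabla u_1)\cdot u_2$ survives, and it is bounded by $\|\nabla u_1\|_{L^2}\|u_2\|_{L^2}\|\nabla u_2\|_{L^2}$ via Lemma \ref{lem:gagliardoniremberg}. It also uses the same $H^1$ estimate: test with $\Delta u_2$, bound the three transport terms in $L^4$, and close with Gr\"onwall since $\|u_2\|_{L^2}^2\|\nabla u_2\|_{L^2}^2\in L^1_T$. Your symmetry-preserving approximation scheme fills in what the paper calls standard.

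There are only cosmetic slips. A mollifier radial in $x_h$ is not itself helically symmetric; what you actually use, correctly, is its invariance under rotations about the $x_3$-axis, which makes convolution commute with $\mathcal S_{\theta,1}$. Your alternative $L^2$ bound via $\|\nabla u_1\|_{L^4}\|u_2\|_{L^{8/3}}^2$ is plain H\"older, not an integration by parts. It gives the Gr\"onwall coefficient $\|\w_1\|_{L^4}^{4/3}$ rather than $\|\w_1\|_{L^2}^2$, which is equally harmless.
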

\begin{proof}
Here we establish the a priori estimates. The regularization of the system and the passage to the limit follow from standard Navier-Stokes methods. 

We multiply \eqref{eq:NSperturbedvelocity} by $u_2$, integrate in space and use lemma \ref{lem:gagliardoniremberg} to deduce
$$
\begin{aligned}
\frac{1}{2}\pa_t\norm{u_2}^2_{L^2}+\norm{\na u_2}_{L^2}^2\lesssim  
\norm{\na u_1}_{L^2}\norm{u_2}_{L^4}^2
\lesssim \norm{\na u_1}_{L^2}\norm{u_2}_{L^2}\norm{\na u_2}_{L^2}.
\end{aligned}$$
We apply Young's inequality for products, the boundedness of the Riesz transforms on $L^2$, and interpolation to deduce
$$
\begin{aligned}
\pa_t\norm{u_2}^2_{L^2}+\norm{\na u_2}_{L^2}^2
\lesssim \norm{\na u_1}^2_{L^2}\norm{u_2}^2_{L^2}\lesssim\norm{\w_1}_{L^{\frac43}}\norm{\w_1}_{L^{4}}\norm{u_2}^2_{L^2}.
\end{aligned}$$
By Gronwall's inequality, 
\begin{equation}\label{LinfL2L2H1exp}
    \begin{aligned}
\norm{u_2(t)}^2_{L^2}+\int_0^t\norm{\na u_2(\tau)}_{L^2}^2d\tau\leq& \norm{u_2(0)}^2_{L^2} \exp(Ct ||\w _1||^2_{L^\infty_T(L_x^\frac{4}{3}\cap L_x^4)}),   
\end{aligned}
\end{equation}
for some constant $C>0$ and any $0\leq t\leq T$. 

Multiplying by $\Delta u_2$, we obtain similarly
$$
\begin{aligned}
\frac{1}{2}\pa_t\norm{\na u_2}^2_{L^2}\!+\!\norm{\Delta u_2}_{L^2}^2\lesssim  &
\norm{\Delta u_2}_{L^2}(\norm{u_2}_{L^4}\norm{\na u_2}_{L^4}\!+\!\norm{u_1}_{L^4}\norm{\na u_2}_{L^4}\!+\!\norm{u_2}_{L^4}\norm{\na u_1}_{L^4}).
\end{aligned}$$
Lemma \ref{lem:gagliardoniremberg} and the boundedness of the Riesz transform on $L^4$ yield
$$
\begin{aligned}
\frac{1}{2}\pa_t\norm{\na u_2}^2_{L^2}\!+\!\norm{\Delta u_2}_{L^2}^2\lesssim  &
\norm{\Delta u_2}_{L^2}^\frac{3}{2}\norm{u_2}^\frac{1}{2}_{L^2}\norm{\na u_2}_{L^2}\!+\!\norm{\Delta u_2}^\frac{3}{2}_{L^2}\norm{\w_1}_{L^\frac{4}{3}}\norm{\na u_2}^\frac{1}{2}_{L^2}\\
&\!+\!\norm{\Delta u_2}_{L^2}\norm{u_2}^\frac{1}{2}_{L^2}\norm{\na u_2}^\frac{1}{2}_{L^2}\norm{\w_1}_{L^4}.
\end{aligned}$$
By Young's inequality, we arrive at
$$
\begin{aligned}
\pa_t\norm{\na u_2}^2_{L^2}\!+\!\norm{\Delta u_2}_{L^2}^2\lesssim  &
\norm{u_2}^2_{L^2}\norm{\na u_2}^4_{L^2}\!+\!\norm{\w_1}^4_{L^\frac{4}{3}}\norm{\na u_2}^2_{L^2}\!+\!\norm{u_2}_{L^2}\norm{\na u_2}_{L^2}\norm{\w_1}^2_{L^4}.
\end{aligned}$$
Gronwall's inequality yields 
$$\begin{aligned}
\norm{\na u_2(t)}^2_{L^2}+\int_0^t\norm{\Delta u_2(\tau)}_{L^2}^2d\tau\leq \Big(&\norm{\na u_2(0)}^2_{L^2}
+\norm{\w_1}^4_{L^\infty_TL_x^\frac{4}{3}}\int_0^t\norm{\na u_2(\tau)}^2_{L^2}d\tau\\
&+\norm{\w_1}^2_{L^\infty_TL^4_x}\int_0^t \norm{u_2(\tau)}_{L^2}\norm{\na u_2(\tau)}_{L^2}d\tau\Big)\exp(Cg(t)),
\end{aligned}$$
with
$$g(t)=\int_0^t \norm{u_2(\tau)}_{L^2}^2\norm{\na u_2(\tau)}_{L^2}^2d\tau.$$
Using estimate \eqref{LinfL2L2H1exp}, the desired global-in-time control is obtained.
\end{proof}

\subsection{Proof of Theorem \ref{thm:globalmain}}
\begin{proof}
For short times, we can apply Theorem \ref{thm:shorttimeWP} and the results from Section \ref{sec:furtherregularity}. With this, we obtain a solution $\w$ to $\eqref{eq:NSDuhamel}$ in the time interval $[0,T_{loc}]$. We fix a time $0<t_1<T_{loc}$, and also an arbitrary $T>T_{loc}$. Let $\chi$ be a smooth function with cylindrical symmetry such that $\chi(r)\equiv 1$ for $r\leq 1$, and $\chi(r)\equiv 0$ for $r\geq 2$. For any $R>0,$ we define $\chi_R(r)=\chi(r/R)$.
We split the velocity $\nabla\wedge(-\Delta)^{-1}\w(t_1)=u(t_1)=u_1(t_1)+u_2(t_1)$, with

$$u_1(t_1)=\LP ((1-\chi_R) u(t_1)), \quad u_2(t_1)=\LP (\chi_R u(t_1)),$$
with $R\gg1$ so that $\nabla\wedge u_1(t_1)$ satisfies \eqref{eq:smallnesscondition}. This is possible because $\nabla\wedge u_1(t_1)=(1-\chi_R)\w(t_1)-\nabla\chi_R\wedge u(t_1)$, so the first term decays as $R\to \infty$ since $\w$ is in $L^p$ and the support of $(1-\chi_R)$ gets away from $0$, while the second one is supported in the annulus $A_R=\{R\leq |x_h|\leq 2R\}$ and therefore 
$$\norm{\nabla\chi_R\wedge u(t_1)}_{L^{\frac{4}{3}}}\lesssim \norm{u(t_1)}_{L^4(A_R)}\to 0, \quad \norm{\nabla\chi_R\wedge u(t_1)}_{L^{4}}\lesssim R^{-1}\norm{u(t_1)}_{L^4(A_R)}\to 0.$$

We then apply Propositions \ref{prop:longtimesmallpart} and \ref{prop:perturbedNSlongtimeWP} to $\nabla\wedge u_1(t_1)=\w_1(t_1)$ and $u_2(t_1)$, respectively. Writing $\nabla\wedge u_2=\omega_2$, we obtain a solution $\w=\omega_1+\omega_2$ in $[t_1,T]$ with $\w_1\in L^\infty([t_1,T];L^\frac{4}{3}\cap L^4)$ and $\w_2\in L^\infty([t_1,T];L^2)$. In particular, interpolation  provides $\w\in L^\infty([t_1,T];L^2)$. Using Duhamel's formula, we bound

$$
\begin{aligned}
\norm{\w(t)}_{L^{\frac{4}{3}}}
\lesssim & \norm{\w(t_1)}_{L^\frac{4}{3}}+\int_{t_1}^t (t-\tau)^{\frac{3}{4}-\frac{3}{4}-\frac{1}{2}}\norm{\w(\tau)}_{L^{2}}\norm{u(\tau)}_{L^4}d\tau\\
\lesssim & \norm{\w(t_1)}_{L^\frac{4}{3}}+\norm{\w}_{L^\infty_{[t_1,T]}L_x^{2}}(\norm{\w_1}_{L^\infty_{[t_1,T]}L^{\frac43}_x}+\norm{u_2}_{L^\infty_{[t_1,T]}H^1_x})T^\frac{1}{2},\\
\end{aligned}
$$
so $\w\in L^\infty([t_1,T];L^\frac{4}{3})$. Similarly, 
$$
\begin{aligned}
\norm{\w(t)}_{L^{1}}
\lesssim & \norm{\w(t_1)}_{L^1}+\int_{t_1}^t (t-\tau)^{1-1-\frac{1}{2}}\norm{\w(\tau)}_{L^{\frac{4}{3}}}\norm{u(\tau)}_{L^4}d\tau\\
\lesssim & \norm{\w(t_1)}_{L^1}+\norm{\w}_{L^\infty_{[t_1,T]}L_x^{\frac{4}{3}}}(\norm{\w_1}_{L^\infty_{[t_1,T]}L^{\frac43}_x}+\norm{u_2}_{L^\infty_{[t_1,T]}H^1_x})T^\frac{1}{2},\\
\end{aligned}
$$
so $\w\in L^\infty([t_1,T];L^1\cap L^2)$. 
Uniqueness for $t_1<t<T$ is granted by Corollary \ref{cor:uniquenessNSLp} and the observation following it. Since $T$ is arbitrary and $t_1$ is smaller than the local existence time $T_{loc}$, this shows global in time well-posedness. Finally, the uniform $L^{4/3}$ bound on compact time intervals allows
us to restart the local theory with a uniform positive lifespan.
Applying Theorem \ref{thm:higherregrate} and Remark \ref{rem:timecontinuity} to these restarted solutions and using uniqueness
to identify them with the global solution, we obtain
$$
\omega\in C([t,T];W^{k,p})
$$
for every $0<t<T<\infty$, $k\in\mathbb N$, and
$1\leq p\leq \infty$.
\end{proof}

\appendix

\section{Fractional Leibniz Rule in $\mathbb{R}^{n_1}\times\T^{n_2}$}
In this appendix, we prove the fractional Leibniz rule on domains of the form $\mathbb{R}^{n_1}\times\mathbb{T}^{n_2}$, which is used in Section \ref{sec:furtherregularity}.
\begin{lemma}\label{lem:KatoPonce}
Let $n_1,n_2\in \N\setminus{\{0\}}$, $\frac{1}{2}<p<\infty$, and
$$
1<q_1,q_2,r_1,r_2<\infty
$$
satisfying
$$
\frac{1}{p}
=
\frac{1}{q_1}+\frac{1}{r_1}
=
\frac{1}{q_2}+\frac{1}{r_2}.
$$
Given $s>\max(0, \frac{n_1+n_2}{p}-(n_1+n_2))$ or $s\in 2\mathbb{N}$, there exists a constant such that
for any $f,g\in \mathscr{S}(\R^{n_1}\times\T^{n_2})$ it holds
$$
\|\Lambda^s(fg)\|_{L^p}
\lesssim
\|\Lambda^s f\|_{L^{q_1}}\|g\|_{L^{r_1}}
+
\|f\|_{L^{q_2}}\|\Lambda^s g\|_{L^{r_2}}.
$$
\end{lemma}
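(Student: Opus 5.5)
The plan is to follow the proof on $\R^n$ in \cite{GrafakosOh2014} and use a transference argument in the spirit of \cite{RodriguezLopez2013} to handle the periodic variables. Write $n=n_1+n_2$ and $\Lambda^s=(-\Delta)^{s/2}$, where $\Delta$ is the full Laplacian on $\R^{n_1}\times\T^{n_2}$. The dual group is $\R^{n_1}\times\Z^{n_2}$, and $\Lambda^s$ is the Fourier multiplier $|\xi|^s$ restricted to this lattice-type set.

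\textbf{Case $s\in 2\N$.} This case is elementary. $\Lambda^s=(-\Delta)^{s/2}$ is a differential operator, so the classical Leibniz rule expands $\Lambda^s(fg)$ as a finite sum of products $\partial^\alpha f\,\partial^\beta g$ with $|\alpha|+|\beta|=s$.
\begin{itemize}
\item Apply H\"older to each product.
\item Use Gagliardo--Nirenberg interpolation, which holds on $\R^{n_1}\times\T^{n_2}$ by the same argument as on $\R^n$ after tensorizing.
\item Use $L^q$-boundedness of Riesz transforms for $1<q<\infty$. This follows from transference, or from the Mikhlin theorem on product groups.
\end{itemize}
Together these reduce each term to the two endpoint terms. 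For $p\le 1$ there is a subtlety, but for even $s$ the quasi-triangle inequality suffices.

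\textbf{General $s$: reduction to bilinear multipliers.} Following \cite{GrafakosOh2014}, take a Littlewood--Paley decomposition in the full frequency variable $\xi\in\R^{n_1}\times\Z^{n_2}$. Split the product into the paraproducts $\sum_j P_{\le j-3}f\,P_jg$, $\sum_j P_jf\,P_{\le j-3}g$, and the diagonal part $\sum_{|j-k|\le 2}P_jf\,P_kg$. Each piece, composed with $\Lambda^s$, becomes a bilinear Fourier multiplier $m(\xi,\eta)$. The symbol is either of Coifman--Meyer type times $|\xi|^s$ or $|\eta|^s$, or, for the diagonal piece, $|\xi+\eta|^s|\eta|^{-s}$ times a Coifman--Meyer symbol.

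Transfer these bilinear multipliers from $\R^n\times\R^n$ to $(\R^{n_1}\times\Z^{n_2})^2$, using the bilinear transference theorem of \cite{RodriguezLopez2013}. That theorem says a bounded bilinear multiplier on $\R^n$ whose symbol is continuous at lattice points restricts to a bounded bilinear multiplier on the product with the torus, with the same exponent range $1<q,r<\infty$ and $p>1/2$. The symbols above are smooth away from the origin. The pieces where they are only homogeneous-continuous are handled by smoothing with the Littlewood--Paley cutoffs, exactly as in \cite{GrafakosOh2014}, so each piece reduces to a Coifman--Meyer multiplier times a bounded operator.

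\textbf{Main obstacle.} The hard part is the high--high interaction when $p<1$. There, \cite{GrafakosOh2014} needs $s>n/p-n$ and a direct estimate using the decay of the kernel of $\Lambda^s P_{\le j}$, rather than a multiplier theorem. Transference in \cite{RodriguezLopez2013} may require $p\geq 1$ or a Banach target. In that case I would handle the quasi-Banach range directly: redo the Grafakos--Oh kernel estimate on $\R^{n_1}\times\T^{n_2}$. The periodized kernel of $\Lambda^sP_j$ is the sum over $\Z^{n_2}$ translates of the $\R^n$ kernel. It keeps the pointwise bound $2^{j(n+s)}(1+2^j|x|)^{-n-s}$ for $2^j\gtrsim 1$; for low frequencies the torus directions contribute only the zero mode and the estimate becomes one on $\R^{n_1}$. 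This yields the Peetre-maximal-function inequalities needed, and the condition $s>\max(0,n/p-n)$ enters exactly where it does on $\R^n$. Finally, sum the pieces with the quasi-triangle inequality and the Fefferman--Stein vector-valued maximal inequality, which holds on spaces of homogeneous type, hence on $\R^{n_1}\times\T^{n_2}$.
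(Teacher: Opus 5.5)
Your proposal is correct and follows the paper's route: a Littlewood--Paley decomposition in the full frequency variable into three bilinear multipliers, the Euclidean bounds for these multipliers from \cite{GrafakosOh2014}, and transfer to $(\R^{n_1}\times\Z^{n_2})^2$ via the homomorphism theorem of \cite{RodriguezLopez2013}. The paper applies that theorem directly in the whole range $p>1/2$, so your separate Leibniz/Gagliardo--Nirenberg argument for $s\in 2\mathbb{N}$ and your kernel-level fallback for $p<1$ are not needed there. The paper also settles a point your phrase ``continuous at lattice points'' skips over: the homogeneous symbols are discontinuous at the joint origin $(0,0)$, and the paper disposes of this by noting that this point is Haar-null in $(\R^{n_1}\times\Z^{n_2})^2$ (since $n_1\ge 1$), so continuity almost everywhere suffices for the transference.
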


\begin{proof}
We denote the frequency variable as
$$
\lambda=(\xi,k),\qquad
|\lambda|=(|\xi|^2+|k|^2)^{1/2},
$$ 
where $\xi\in\R^{n_1}$ and $k\in\Z^{n_2}.$ We now introduce the usual cutoffs from the Littlewood–Paley decomposition. Let $\Phi\in\mathscr{S}(\R^{n_1+n_2})$ be a radial function so that $\Phi(\lambda)\equiv 1$ for $|\lambda|\leq1$, and supported in the region $\{|\lambda|<2\}$. We define $\Psi(\lambda)=\Phi(\lambda)-\Phi(2\lambda)$.
In particular, we have
$$
\sum_{j\in\mathbb{Z}}\Psi(2^{-j}\lambda)=1,
\qquad \lambda\neq 0.
$$
We observe that the point $\lambda=0$ has Haar measure zero in
$\mathbb{R}^{n_1}\times\mathbb{Z}^{n_2}$.
Let $P_j$ denote the Fourier multiplier defined by
$$
\widehat{P_jf}(\lambda)
=
\Psi(2^{-j}\lambda)\widehat f(\lambda).
$$
The dyadic partition of unity gives
$$
\Lambda^s(fg) = \sum_{j,j'\in\mathbb Z}
\Lambda^s\bigl((P_jf)(P_{j'}g)\bigr).
$$
We split this sum into the regions $j'\leq j-2$,
$j\leq j'-2$, and $|j-j'|\leq1$, obtaining
$$
\begin{aligned}
\Lambda^s(fg)
={}&
\sum_{j\in\mathbb Z}
\Lambda^s\big(
(P_jf)\sum_{j'\leq j-2}P_{j'}g
\big)
+
\sum_{j'\in\mathbb Z}
\Lambda^s\big(
\big(\sum_{j\leq j'-2}P_jf\big)(P_{j'}g)
\big)
+
\sum_{\substack{j,j'\in\mathbb Z\\ |j-j'|\leq1}}
\Lambda^s\bigl((P_jf)(P_{j'}g)\bigr).
\end{aligned}
$$

Denote by $T_m$ the bilinear Fourier multiplier defined on $(\mathbb{R}^{n_1}\times\mathbb{T}^{n_2})^2$, with symbol $m$ defined on $(\mathbb{R}^{n_1}\times\mathbb{Z}^{n_2})^2$. Using that $\sum_{j'\leq j-2}\Psi(2^{-j'}\lambda)
=
\Phi(2^{-j+2}\lambda)$ for $\lambda\neq0$, we obtain
\begin{equation}
\label{eq:leibniz-decomposition}
\Lambda^s(fg)
=
T_{m_1}(\Lambda^s f,g)
+
T_{m_2}(f,\Lambda^s g)
+
T_{m_3}(f,\Lambda^s g),
\end{equation}
where
$$
\begin{aligned}
m_1(\lambda,\mu)
=&
\sum_{j\in\mathbb{Z}}
\Psi(2^{-j}\lambda)
\Phi(2^{-j+2}\mu)
\frac{|\lambda+\mu|^s}{|\lambda|^s},\\
m_2(\lambda,\mu)
=&
\sum_{j'\in\mathbb{Z}}
\Phi(2^{-j'+2}\lambda)
\Psi(2^{-j'}\mu)
\frac{|\lambda+\mu|^s}{|\mu|^s},\\
m_3(\lambda,\mu)
=&
\sum_{\substack{j,j'\in\mathbb{Z}\\ |j-j'|\leq 1}}
\Psi(2^{-j}\lambda)
\Psi(2^{-j'}\mu)
\frac{|\lambda+\mu|^s}{|\mu|^s}.
\end{aligned}
$$
We observe that the corresponding frequency
cutoff vanishes when the denominator vanishes, so the definitions above are well defined.

We now consider the corresponding symbols on $(\mathbb{R}^{n_1+n_2})^2$.
Let $M_i$, $i=1,2,3$, be defined by the same formulas as
$m_i$, but with
$$
\lambda,\mu\in\mathbb{R}^{n_1+n_2}.
$$
The symbols are related by
$$
m_i=M_i\circ(\pi\otimes\pi),
\qquad i=1,2,3,
$$
where
$$
\pi:\mathbb{R}^{n_1}\times\mathbb{Z}^{n_2}\longrightarrow\mathbb{R}^{n_1+n_2},
\qquad
\pi(\xi,k)=(\xi,k).
$$

The operators associated with $M_1,M_2,M_3$ share the structure of the three
operators $\Pi_1,\Pi_2,\Pi_3$ appearing in the proof of the
homogeneous $\R^n$-Kato-Ponce inequality in
\cite[Section~3, proof of Theorem~1]{GrafakosOh2014}.
In particular, their result gives for any $F,G\in\mathscr{S}(\R^{n_1+n_2})$ and $1/p=1/q_i+1/r_i$
\begin{equation}
\label{eq:euclidean-m23}
\|T_{M_i}^{\mathbb{R}^{n_1+n_2}}(F,G)\|_{L^p(\mathbb{R}^{n_1+n_2})}
\lesssim
\|F\|_{L^{q_i}(\mathbb{R}^{n_1+n_2})}
\|G\|_{L^{r_i}(\mathbb{R}^{n_1+n_2})},
\qquad i=1,2,3.
\end{equation}

We now apply the homomorphism theorem for bilinear multipliers \cite[Theorem~2.2 (i)]{RodriguezLopez2013} to the homomorphism $\pi$.
We can do it because the symbols $M_i$ are bounded and continuous on
\[
(\mathbb{R}^{n_1+n_2})^2\setminus\{(0,0)\}.
\]
Although there is no continuity at the joint origin, this does
not affect the argument. Indeed, by
\cite[Observation 3]{RodriguezLopez2013}, it is enough if the
continuity holds almost everywhere in $(\mathbb{R}^{n_1}\times\mathbb{Z}^{n_2})^2$.

Therefore,
\eqref{eq:euclidean-m23} transfers to
$(\mathbb{R}^{n_1}\times\mathbb{Z}^{n_2})^2$ and the symbols $m_i$.
Applying these estimates to
\eqref{eq:leibniz-decomposition} and choosing $(q_3,r_3)=(q_2,r_2)$, we conclude that
\[
\|\Lambda^s(fg)\|_{L^p}
\lesssim
\|\Lambda^s f\|_{L^{q_1}}\|g\|_{L^{r_1}}
+
\|f\|_{L^{q_2}}\|\Lambda^s g\|_{L^{r_2}},
\]
as desired.
\end{proof}

\begin{rem}
The same argument also yields the corresponding Kato-Ponce
inequality for the inhomogeneous fractional derivative
$J^s=(1-\Delta)^{s/2}$. Indeed, the required Euclidean estimates
are also proved in \cite{GrafakosOh2014}, and the corresponding
bilinear multipliers can be transferred to
$\mathbb{R}^{n_1}\times\mathbb{T}^{n_2}$ by the same application of
\cite[Theorem~2.2]{RodriguezLopez2013}.
\end{rem}
\section*{Acknowledgement}
This work originated during a visit by A.H.T. to F.G. at the Institute for Advanced Study in Princeton. F.G. and A.H.T. would like to thank the IAS for its hospitality and for providing an excellent research environment during the visit. F.G. and A.H.T. were partially supported by the AEI through the grants PID2022-140494NA-I00 and PID2024-158418NB-I00. F.G. was partially supported by the grant RED2022-134784-T (Spain) and the IMUS-Maria de Maeztu grant CEX2024-001517-M, funded by MICIU/AEI/10.13039/501100011033. A.H.T. was partially supported by the Institute for Advanced Study and the Max Planck Institute for Mathematics in the Sciences.

\bibliographystyle{abbrv}
\bibliography{tesis}

\begin{flushleft}
	Francisco Gancedo\\
	\textsc{Departamento de An\'alisis Matem\'atico \& Universidad de Sevilla.\\
	41012, Sevilla, Spain}\\
	\textit{E-mail address:
    fgancedo@us.es}
\end{flushleft}

\begin{flushleft}
	Antonio Hidalgo-Torn\'e\\
	\textsc{MPI für Mathematik in den Naturwissenschaften. \\
    04103, Leipzig, Germany.}\\
	\textit{E-mail address: hidalgo@mis.mpg.de}
\end{flushleft}

\end{document}